\newtheorem{lem}{Lemma}
\newcommand{\parset}{
	\setlength{\parskip}{3mm}
  	\setlength{\parindent}{0mm}}
  \renewcommand{\Pr}{\mbox{\rm Pr}}
  \newcommand{\Exp}{{\mathbb{E}}}
  \DeclareMathOperator{\Bern}{Bern}
  \newcommand{\R}{\mathbb{R}} % reals
  \newcommand{\N}{\mathbb{N}} % natural numbers
  \newcommand{\Z}{\mathbb{Z}} % integers
  \newcommand{\pmset}[1]{\{-1,1\}^{#1}} % hypercube in +-1 basis
  \newcommand{\bset}[1]{\{0,1\}^{#1}} % hypercube
  \newcommand{\one}{\mathbf{1}}
  \newcommand{\st}{:\,} % "such that" to define sets
  \newcommand{\eps}{\varepsilon}
  \newcommand{\poly}{\mbox{\rm poly}}
  \newcommand{\ceil}[1]{\lceil{#1}\rceil}
  \newcommand{\floor}[1]{\lfloor{#1}\rfloor}
  \newcommand{\beq}{\begin{equation}}
  \newcommand{\eeq}{\end{equation}}
  \newcommand{\beqn}{\begin{equation*}}
  \newcommand{\eeqn}{\end{equation*}}
  \newcommand{\beqr}{\begin{eqnarray}}
  \newcommand{\eeqr}{\end{eqnarray}}
  \newcommand{\beqrn}{\begin{eqnarray*}}
  \newcommand{\eeqrn}{\end{eqnarray*}}
  \newcommand{\bmline}{\begin{multline}}
  \newcommand{\emline}{\end{multline}}
  \newcommand{\bmlinen}{\begin{multline*}}
  \newcommand{\emlinen}{\end{multline*}}
  \theoremstyle{plain}
  \newtheorem{theorem}{Theorem}[section]
  \newtheorem{proposition}[theorem]{Proposition}
  \newtheorem{corollary}[theorem]{Corollary}
  \theoremstyle{definition}
  \theoremstyle{remark}
  \renewenvironment{proof}[1][]{
    	\begin{trivlist}
     	\item[\hspace{\labelsep}{\em\noindent Proof#1:\/}]}
     	{{\hfill$\Box$}
    	\end{trivlist}
  }
  \newcommand{\newreptheorem}[2]{\newtheorem*{rep@#1}{\rep@title}\newenvironment{rep#1}[1]{\def\rep@title{#2 \ref*{##1}}\begin{rep@#1}}{\end{rep@#1}}}
\begin{document}
\title[]{On the threshold for Szemer\'edi's theorem with random differences}

\author{Jop Bri\"{e}t}
\address{CWI \& QuSoft, Science Park 123, 1098 XG Amsterdam, The Netherlands}
\email{j.briet@cwi.nl}

\author{Davi Castro-Silva}
\address{CWI \& QuSoft, Science Park 123, 1098 XG Amsterdam, The Netherlands}
\email{davisilva15@gmail.com}

\thanks{This work was supported by the Dutch Research Council (NWO) as part of the NETWORKS programme (grant no. 024.002.003). An extended abstract of this work appeared in the proceedings of EUROCOMB'23~\cite{BrietCastroSilva:2023}.}

\begin{abstract}
Using recent developments on the theory of locally decodable codes, we prove that the critical size for Szemer\'edi's theorem with random differences is bounded from above by $N^{1-\frac{2}{k} + o(1)}$ for length-$k$ progressions.
This improves the previous best bounds of $N^{1-\frac{1}{\ceil{k/2}} + o(1)}$ for all odd~$k$.
\end{abstract}

\maketitle

\section{Introduction}

Szemer\'edi~\cite{Szemeredi:1975} proved that dense sets of integers contain arbitrarily long arithmetic progressions, a result which has become a hallmark of additive combinatorics.
Multiple proofs of this result were found over the years, using ideas from combinatorics, ergodic theory and Fourier analysis over finite abelian groups.

Furstenberg's ergodic theoretic proof~\cite{Furstenberg:1977} opened the floodgates to a series of powerful generalizations.
In particular, it led to versions of Szemer\'edi's theorem where the common differences for the arithmetic progressions are restricted to very sparse sets.
We say that a set $D\subseteq [N]$ is \emph{$t$-intersective} if any positive-density set $A\subseteq [N]$ contains a $(t+1)$-term arithmetic progression with common difference in~$D$.
Szemer\'edi's theorem implies that, for large enough~$N_0$, the set $\{0,1,\dots,N_0\}$ is $t$-intersective for all $N \geq N_0$.
Non-trivial examples include a result of Bergelson and Leibman~\cite{BergelsonL:1996} showing that the perfect squares (and more generally, images of integer polynomials with zero constant term) are $t$-intersective for every $t$, and
a special case of a result of Wooley and Ziegler~\cite{WooleyZ:2012} showing the same for the prime numbers minus one.

The existence of such sparse intersective sets motivated the problem of showing whether, in fact, random sparse sets are typically intersective.
The task of making this quantitative falls within the scope of research on threshold phenomena.
We say that a property of subsets of~$[N]$, given by a family $\mathcal F \subseteq 2^{[N]}$, is \emph{monotone} if $A\in \mathcal F$ and $A\subseteq B\subseteq [N]$ imply $B\in \mathcal F$.
The \emph{critical size} $m^* = m^*(N)$ of a property is the least~$m$ such that a uniformly random $m$-element subset of~$[N]$ has the property with probability at least~$1/2$.
(This value exists if~$\mathcal F$ is non-empty and monotone, as this probability then increases monotonically with~$m$.)
A famous result of Bollob\'as and Thomason~\cite{BollobasT:1987} asserts that every monotone property has a threshold function;
this is to say that the probability 
\beqn
p(m) = \Pr_{A\in \binom{[N]}{m}}[A\in \mathcal F]
\eeqn 
%that a uniformly random $m$-element set has the property 
spikes from $o(1)$ to $1 - o(1)$ when~$m$ increases from $o(m^*)$ to~$\omega(m^*)$.
In general, it is notoriously hard to determine the critical size of a monotone property.

This problem is also wide open for the property of being $t$-intersective, which is clearly monotone, and for which we denote the critical size by~$m^*_t(N)$.
Bourgain~\cite{Bourgain:1987} showed that the critical size for 1-intersective sets is given by $m_1^*(N) \asymp \log N$;
at present, this is the only case where precise bounds are known.
It has been conjectured~\cite{FrantzikinakisLW:2016} that $\log N$ is the correct bound for all fixed~$t$, and indeed no better lower bounds are known for $t \geq 2$.
It was shown by Frantzikinakis, Lesigne and Wierdl~\cite{FrantzikinakisLW:2012} and independently by Christ~\cite{Christ:2011} that 
\beq\label{eq:rootbound}
m_2^*(N) \ll N^{\frac{1}{2} + o(1)}.
\eeq
The same upper bound was later shown to hold for $m_3^*(N)$ by the first author, Dvir and Gopi~\cite{BrietDG:2019}.
More generally, they showed that 
\beq\label{eq:ceilbounds}
m_t^*(N) \ll N^{1 - \frac{1}{\ceil{(t+1)/2}} + o(1)},
\eeq
which improved on prior known bounds for all $t \geq 3$.
The appearance of the ceiling function in these bounds is due to a reduction for even $t$ to the case $t+1$.
The reason for this reduction originates from work on locally decodable error correcting codes~\cite{KerenidisW:2004}.
It was shown in~\cite{BrietDG:2019} that lower bounds on the block length of $(t+1)$-query locally decodable codes (LDCs) imply upper bounds on~$m_t^*$.
The bounds~\eqref{eq:ceilbounds} then followed directly from the best known LDC bounds;
see~\cite{BrietG:2020} for a direct proof of~\eqref{eq:ceilbounds}, however.

For the same reason, a recent breakthrough of Alrabiah et al.~\cite{AlrabiahGKM:2022} on 3-query LDCs immediately implies an improvement of~\eqref{eq:rootbound} to
\beqn
m_2^*(N) \ll N^{\frac{1}{3} + o(1)}.
\eeqn
For technical reasons, their techniques do not directly generalize to improve the bounds for $q$-query LDCs with $q \geq 4$.
Here, we use the ideas of~\cite{AlrabiahGKM:2022} to directly prove upper bounds on~$m_t^*$.
Due to the additional arithmetic structure in our problem, it is possible to simplify the exposition and, more importantly, apply the techniques to improve the previous best known bounds for all even~$t\geq 2$.

\begin{theorem}\label{thm:integers}
For every integer $t \geq 2$, we have that 
\beqn
m_t^*(N) \ll N^{1 - \frac{2}{t+1} + o(1)}.
\eeqn
\end{theorem}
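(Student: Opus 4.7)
The proof will follow the coding-theoretic template of \cite{BrietDG:2019}---converting a non-$t$-intersective set $D$ into a $(t+1)$-query decoding object and bounding its complexity---but it will import the smoothing and hypercontractivity machinery of \cite{AlrabiahGKM:2022} directly into the arithmetic setting, rather than citing LDC lower bounds as a black box. Doing so should remove the parity-driven loss that, in \cite{BrietDG:2019}, forced even $t$ to be treated via the $(t+2)$-query LDC bound.

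First, towards a contradiction I would suppose that $D\subseteq[N]$ with $|D|=m\gg N^{1-2/(t+1)+o(1)}$ is not $t$-intersective, witnessed by $A\subseteq[N]$ with $|A|\ge\delta N$ containing no $(t+1)$-AP whose common difference lies in~$D$. After embedding into a cyclic group $\Z/p\Z$ with $p$ a prime slightly larger than $N$, the hypothesis translates into
\beqn
 \sum_{x}\prod_{i=0}^{t}\one_A(x+id)=0\qquad \text{for every }d\in D.
\eeqn
Standard pigeonholing and a density-increment reduction additionally let me assume a Szemer\'edi-type lower bound on $(t+1)$-AP counts of $A$ for a typical common difference, so the simultaneous vanishing across all of $D$ is a very rigid condition.

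Second, writing $\one_A=\delta+f$ with $\Exp f = 0$ and expanding, the identity becomes a linear relation on the mixed moments $\Exp_x f(x)f(x+d)\cdots f(x+td)$ indexed by $d\in D$. For each $d$, the $(t+1)$-uniform family $\{(x,x+d,\dots,x+td)\colon x\in\Z/p\Z\}$ is a translation-invariant perfect ``matching,'' which automatically enjoys the spread and regularity property that \cite{AlrabiahGKM:2022} must work to establish in the general LDC setting. I would then apply an $L^q$--$L^2$ hypercontractive inequality with $q=t+1$---tuned to the actual number of queries rather than to the nearest odd integer above---to these mixed moments, sum over $d\in D$, and derive a bound of the form $m\cdot\delta^{O(t)}\ll N^{t-1+o(1)}$, i.e.\ $m\ll N^{1-2/(t+1)+o(1)}$.

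The main obstacle I foresee is the smoothing / edge-deletion step. In \cite{AlrabiahGKM:2022} this is needed to remove atypical substructures that would otherwise spoil the hypercontractive bound, and here it has to be carried out in a way that simultaneously preserves the additive (Fourier) structure and leaves a positive density of surviving differences in~$D$. Making this step compatible with the optimal exponent $q=t+1$ for \emph{both} parities of $t$---so that no spurious extra query is introduced when $t$ is even---is where I expect the bulk of the technical work to lie.
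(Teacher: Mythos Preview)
Your high-level plan—porting the machinery of \cite{AlrabiahGKM:2022} directly into the arithmetic setting to avoid the parity loss of \cite{BrietDG:2019}—is exactly what the paper does, but the concrete mechanism you sketch has genuine gaps.

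First, by fixing a single non-intersective $D$ together with its witness $A$ you obtain only a deterministic system of identities $\Lambda_d(A)=0$ for $d\in D$; there are no random variables left to feed into any Khintchine or hypercontractive inequality, and the ``mixed moments'' $\Exp_x f(x)\cdots f(x+td)$ you write down are just fixed real numbers. The paper instead starts from the assumption that a \emph{random} $D\in G^m$ fails with probability $>1/2$, combines this with Szemer\'edi's theorem to get $\Exp_D\max_A|\Lambda_D(A)-\Lambda_G(A)|\gg_{k,\eps}1$, and then symmetrizes (Lemma~\ref{lem:symmetrization}) to introduce i.i.d.\ signs $\sigma\in\{-1,1\}^m$ attached to the elements of $D$. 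Only after this is a ``good'' $D$ fixed; the signs $\sigma$ are what drive the noncommutative Khintchine inequality downstream. Your setup never produces such a sign average, so there is nothing to apply concentration to.

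Second, you locate the parity obstruction in the smoothing step and propose to cure it by choosing $q=t+1$ in an $L^q$--$L^2$ bound. That is not where the obstruction sits. The AGKM argument embeds each constraint into a Kikuchi-type matrix indexed by $\binom{G}{s}$ (here $s=\lfloor N^{1-2/k}\rfloor$) via the condition $S\triangle T=P_{ij}(x)$, which forces the tuple to have \emph{even} size; this is why odd $k$ (i.e.\ even $t$) is the hard case. The paper's fix is the Cauchy--Schwarz step of Lemma~\ref{lem:CS}, which squares out the $Z(x)$ factor and replaces the $k$-term product by a $2(k-1)$-term product over pairs $(i,j)\in L\times R$, simultaneously introducing a second layer of signs $\tau\in\{-1,1\}^R$; this, not any tuning of the pruning, is what removes the ceiling in the exponent. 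The pruning itself (Lemma~\ref{lem:prune}, via Kim--Vu) is orthogonal to parity, and the final upper bound uses matrix Khintchine (Theorem~\ref{thm:Khintchine}), not hypercontractivity. Finally, your displayed intermediate inequality $m\cdot\delta^{O(t)}\ll N^{t-1+o(1)}$ does not imply $m\ll N^{1-2/(t+1)+o(1)}$, so the numerology in the sketch does not close either.
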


The arguments presented here in fact work in greater generality, and hold for any finite additive group~$G$ whose size is coprime to~$t!$
(so as not to incur in divisibility issues when considering $(t+1)$-term arithmetic progressions).

Let~$G$ be a finite additive group, $t\geq 1$ be an integer and $\eps\in (0, 1)$.
We say that a set $S\subseteq G$ is \emph{$(t,\eps)$-intersective} if every subset $A\subseteq G$ of size $|A|\geq \eps |G|$ contains a $(t+1)$-term arithmetic progression with common difference in~$D$.
We denote the critical size for the property of being $(t,\eps)$-intersective in~$G$ by $m_{t,\eps}^*(G)$.
Our main result is the following:

\begin{theorem}\label{thm:main}
For every $t \geq 2$ and $\eps\in (0,1)$, there exists $C(t,\eps)>0$ such that
\beqn
m_{t,\eps}^*(G) \leq C(t,\eps) (\log |G|)^{2t+3} |G|^{1 - \frac{2}{t+1}}
\eeqn
for every additive group~$G$ whose size is coprime to~$t!$.
\end{theorem}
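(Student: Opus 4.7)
The plan is to adapt the Kikuchi matrix method of Alrabiah, Gurpinar, Kothari and Manohar~\cite{AlrabiahGKM:2022} to the arithmetic setting, taking advantage of the algebraic rigidity of $(t+1)$-term arithmetic progressions so that the argument goes through for all $t \geq 2$ rather than only the 3-query case. I would begin with the contrapositive: set $m$ equal to the claimed bound, and suppose for contradiction that a uniformly random $m$-subset $D \subseteq G$ fails to be $(t,\eps)$-intersective with probability at least~$1/2$. For each such bad~$D$, fix a witness set $A_D \subseteq G$ of density at least~$\eps$ containing no $(t+1)$-AP with common difference in $D \setminus \{0\}$. Via Fourier analysis on~$G$ (using coprimality of $|G|$ with~$t!$ to invert the multiplications by $1,2,\ldots,t$ appearing as AP coefficients), the vanishing of each $(t+1)$-AP count with difference $d \in D$ translates into an identity expressing $\alpha^{t+1}$, where $\alpha = |A_D|/|G|$, as a linear combination of products of Fourier coefficients of $1_{A_D}$ at characters depending linearly on~$d$.

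Summing this identity over $d \in D$ and applying iterated Cauchy--Schwarz in the spirit of the generalized von Neumann inequality, I would convert the multilinear identity into a spectral statement: a Kikuchi-type positive semidefinite operator $K_{D,A_D}$, indexed by $\ell$-tuples drawn from~$D$ with $\ell = \Theta(\log|G|)$, must have large operator norm. The combinatorial content is that two $\ell$-tuples are joined when their symmetric difference aligns with the support of a $(t+1)$-AP inside $A_D$; the rigidity that any two terms of an AP determine the base point and the step --- unavailable in the purely combinatorial LDC setup --- is what should make the decomposition clean and independent of the parity of~$t$. In the opposite direction, when $D$ is random, I would upper bound the spectral norm of $K_{D,A_D}$ via the trace method, estimating $\Exp[\Tr(K_{D,A_D}^{2\ell})]$ and absorbing the choice of~$A_D$ by a union bound over the at most $2^{|G|}$ candidate witness sets. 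Comparing these two bounds and optimizing $\ell \sim \log|G|$ should yield the claimed inequality, with the polylogarithmic factor $(\log|G|)^{2t+3}$ accounting for the Kikuchi level, the depth of the Cauchy--Schwarz iteration, and the cost of the union bound.

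The main obstacle is the design of $K_{D,A_D}$ in the second step: one must arrange the iterated Cauchy--Schwarz so that the resulting exponent is exactly $1-2/(t+1)$ rather than the weaker $1-1/\ceil{(t+1)/2}$ obtained in~\cite{BrietDG:2019}. The essential novelty of~\cite{AlrabiahGKM:2022} over earlier LDC lower bounds was precisely the removal of the parity-based reduction from odd to even query number, and the arithmetic analogue here is to treat $(t+1)$-APs directly for even~$t$, exploiting the group law that closes each AP under translation.
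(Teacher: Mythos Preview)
Your proposal captures the right meta-strategy---adapt the Kikuchi machinery of~\cite{AlrabiahGKM:2022} to $(t+1)$-APs---but several of the concrete mechanisms you describe differ from what the paper actually does, and at least two of them would not work as stated.

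First, the union bound over the $2^{|G|}$ possible witness sets~$A_D$ is fatal. To survive such a union bound you would need the trace-method tail at Kikuchi level $\ell=\Theta(\log|G|)$ to be of order $2^{-|G|}$, and it is not remotely that small. The paper sidesteps this entirely: after invoking Szemer\'edi's theorem to get $\Exp_D\max_A|\Lambda_D(A)-\Lambda_G(A)|\gg_{k,\eps}1$, it applies a \emph{symmetrization} argument to introduce independent Rademacher signs~$\sigma_i$, and then the supremum over~$A$ (equivalently over $Z\in\{-1,1\}^G$) is absorbed into an $\|\cdot\|_{\infty\to1}$ matrix norm. The spectral upper bound comes from the noncommutative Khintchine inequality applied to the \emph{signs}, not from a trace method over random~$D$. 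In fact, after symmetrization the paper simply fixes a single good~$D$ and uses the randomness of~$D$ only for two mild regularity conditions.

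Second, your Kikuchi operator is indexed by the wrong set and at the wrong scale. The matrices in the paper are indexed by $\binom{G}{s}$ with $s=\lfloor|G|^{1-2/(t+1)}\rfloor$, i.e.\ by large subsets of the ambient group, not by $\Theta(\log|G|)$-tuples drawn from~$D$. The entry $M_{ij}(S,T)$ records when the symmetric difference $S\triangle T$ equals the union of two shifted progressions $P_i(x)\cup P_j(x)$; the choice $s\asymp|G|^{1-2/(t+1)}$ is precisely what makes the row-sum estimate and the final exponent come out right.

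Third, the Cauchy--Schwarz step is not an iterated von Neumann argument. For odd $k=t+1$ the paper applies Cauchy--Schwarz \emph{once} to double the progression, turning $\sigma_i\prod_{\ell=0}^{k-1}Z(x+\ell d_i)$ into $\sigma_i\tau_j\prod_{\ell=1}^{k-1}Z(x+\ell d_i)Z(x+\ell d_j)$ over a bipartition $[m]=L\uplus R$, which produces an even number of $Z$-factors. No Fourier analysis is used; the Gowers/von Neumann hierarchy does not enter. The arithmetic rigidity you correctly identify is used later, when bounding row weights: it guarantees that knowing two points of a progression pins down the base and the difference, which (together with a Chernoff-type bound on multiplicities of~$\ell d_i$) feeds into a Kim--Vu polynomial concentration estimate controlling the pruning of heavy rows before Khintchine is applied.
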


Note that Theorem~\ref{thm:integers} follows easily from this last result by embedding~$[N]$ into a group of the form~$\Z/p\Z$, where~$p$ is a prime between~$(t+1)N$ and~$2(t+1)N$.
We omit the standard details.

\section{Preliminaries}
\label{sec:prelim}

\subsection{Notation}

We write $\biguplus$ for a disjoint union.
Our (standard) asymptotic notation is defined as follows.
Given a parameter~$n$ which grows without bounds and a function $f:\R_+ \to \R_+$, we write:
$g(n) = o(f(n))$ to mean $g(n)/f(n)\to 0$; $g(n)=\omega(f(n))$ to mean $g(n)/f(n)\to \infty$; $g(n) \ll f(n)$ to mean that $g(n) \leq C f(n)$ holds for some constant~$C>0$ and all~$n$;
and $g(n) \asymp f(n)$ to mean both $g(n) \ll f(n)$ and $f(n) \ll g(n)$.
When the implied constant in the asymptotics depends on some parameter (say $\eps$), we indicate this by adding said parameter as a subscript in the asymptotic notation (replacing $\ll$ by $\ll_{\eps}$, say).

\subsection{Matrix norms and inequalities}

Our arguments will rely heavily on the analysis of high-dimensional matrices.
Here we recall the matrix inequalities which will be needed.

If $M\in \R^{d\times d}$ is a matrix, we define its operator norms
\begin{align*}
    \|M\|_2 &= \max \big\{ u^T M v:\: \|u\|_2 = \|v\|_2 = 1\big\} \\
    \|M\|_{\infty\to 1} &= \max \big\{ u^T M v:\: \|u\|_\infty = \|v\|_\infty = 1\big\}\\
    \|M\|_{1\to 1} &= \max \big\{u^T Mv:\: \|u\|_\infty = \|v\|_1 = 1\big\}.
\end{align*}
We will make use of the following simple inequalities:
\beqn
\|M\|_{\infty\to 1} \leq d \|M\|_{2}, \quad \|M\|_{\infty\to 1} \leq \sum_{i=1}^d \|M(i, \cdot)\|_1
\eeqn
and, when $M$ is symmetric,
\beqn
\quad \|M\|_2 \leq  \|M\|_{1\to 1}.
\eeqn
We will also use the following noncommutative version of Khintchine's inequality, which can be extracted from a result of Tomczak-Jaegermann~\cite{TJ:1974}:

\begin{theorem}\label{thm:Khintchine}
Let $n, d \geq 1$ be integers, and let $A_1, \dots, A_n$ be any sequence of $d\times d$ real matrices.
Then
\beqn
\Exp_{\sigma \in \pmset{n}} \bigg\| \sum_{i=1}^n \sigma_i A_i \bigg\|_2 \leq 10 \sqrt{\log d} \bigg(\sum_{i=1}^n \|A_i\|_2^2 \bigg)^{1/2}.
\eeqn
\end{theorem}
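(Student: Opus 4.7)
The plan is to reduce the spectral-norm bound to a moment inequality in Schatten $p$-norms via the standard \emph{Schatten-norm trick}, and then invoke the theorem of Tomczak-Jaegermann on the Rademacher type of the Schatten classes.

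For a $d\times d$ real matrix $M$ with singular values $s_1\geq\dots\geq s_d$, denote the Schatten $p$-norm by $\|M\|_{S^p} := (\sum_k s_k^p)^{1/p}$. Since $\|M\|_2 = s_1$, the elementary comparisons
\beqn
\|M\|_2 \;\leq\; \|M\|_{S^p} \;\leq\; d^{1/p}\|M\|_2 \qquad (p\geq 1)
\eeqn
show that for $p\asymp \log d$ the spectral norm and the Schatten $p$-norm agree up to a factor of $\sqrt{e}$. Writing $S := \sum_{i=1}^n \sigma_i A_i$, the plan is to bound $\Exp_\sigma \|S\|_{S^p}$ for such a $p$ and then transfer back to $\|S\|_2$.

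The substantive input is Tomczak-Jaegermann's theorem, which states that $S^p$ has Rademacher type $2$ with constant $O(\sqrt{p})$ for $p\geq 2$; concretely, it yields
\beqn
\big(\Exp_\sigma \|S\|_{S^p}^2\big)^{1/2} \;\leq\; C\sqrt{p}\Big(\sum_{i=1}^n \|A_i\|_{S^p}^2\Big)^{1/2}
\eeqn
for some absolute constant $C$. Using $\|A_i\|_{S^p}\leq d^{1/p}\|A_i\|_2$ on the right, and $\|S\|_2\leq \|S\|_{S^p}$ together with Jensen's inequality on the left, I obtain
\beqn
\Exp_\sigma \|S\|_2 \;\leq\; C\sqrt{p}\cdot d^{1/p}\Big(\sum_{i=1}^n \|A_i\|_2^2\Big)^{1/2}.
\eeqn
Taking $p = 2\lceil \log d\rceil$, so that $d^{1/p}\leq \sqrt{e}$, and absorbing constants gives the claimed bound with the stated absolute constant $10$.

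The main obstacle is the type $2$ inequality for $S^p$, which is the substantive content of the cited theorem; the Schatten trick itself is only a one-line reduction. A self-contained alternative would be to compute the moment $\Exp_\sigma \Tr[(SS^T)^{q}]$ directly for $q = p/2$: expanding, and using that a product of independent Rademacher signs has expectation $1$ precisely when each index appears an even number of times, reduces the task to bounding a sum of traces $\Tr[A_{i_1}A_{j_1}^T\cdots A_{i_q}A_{j_q}^T]$ indexed by pair partitions of $[2q]$. Applying $|\Tr[B_1\cdots B_{2q}]|\leq d\prod_r\|B_r\|_2$ to each of the $(2q-1)!!$ pairings and the crude count $(2q-1)!!\leq (2q)^q$ recovers the same bound up to a marginally worse absolute constant.
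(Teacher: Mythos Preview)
The paper does not actually prove this theorem; it merely cites it as a result that ``can be extracted from a result of Tomczak-Jaegermann'' and uses it as a black box. Your proposal is correct and is precisely the standard extraction: Tomczak-Jaegermann's theorem gives the type~2 constant $O(\sqrt{p})$ for the Schatten class~$S^p$, and the choice $p\asymp\log d$ together with the comparison $\|\cdot\|_2\leq\|\cdot\|_{S^p}\leq d^{1/p}\|\cdot\|_2$ transfers this to the spectral norm with the logarithmic loss. One small caveat: the theorem as stated allows $d=1$, for which $\sqrt{\log d}=0$ makes the inequality vacuously false; this is a defect of the statement rather than of your argument, and is harmless in the paper's application where $d=\binom{N}{s}$ is enormous. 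Your alternative moment-method sketch is also sound, though the combinatorics of the pairings requires a bit more care than indicated to recover the $\sqrt{p}$ rather than a worse power of~$p$.
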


\subsection{Polynomial concentration bounds}

We will need a well-known concentration inequality for polynomials due to Kim and Vu~\cite{KimV:2000}, which requires the introduction of some extra notation.
Let $H = (V,E)$ be a hypergraph, where we allow for repeated edges (so~$E$ may be a multiset), and let $f:\bset{V}\to \R$ be the polynomial given by
\beq\label{eq:fkimvu}
f(x) = \sum_{e\in E}\prod_{v\in e}x_v.
\eeq
For a set $A\subseteq V$, define
\beqn
f_A(x) = \sum_{e\in E:\, A\subseteq e}\prod_{v\in e\setminus A}x_v,
\eeqn
where the monomial corresponding to the empty set is defined to be~1.
For $p\in (0,1)$, we say that $X$ is a \emph{$p$-Bernoulli random variable on $\bset{V}$},
%Bernoulli-$p$ distributed $\bset{V}$-valued random variable,
denoted $X\sim\Bern(p)^V$, if its coordinates are all independent and each equals~1 with probability~$p$ (and equals~0 with probability~$1-p$).
For each $i\in \{0,1,\dots, |V|\}$, define
\beqn
\mu_i = \max_{A\in {V\choose i}}\Exp_{X\sim\Bern(p)^V} f_A(X).
\eeqn
Note that $\mu_0$ is just the expectation of~$f(X)$. Define also the quantities
\beqn
\mu = \max_{i\in \{0,1,\dots,|V|\}} \mu_i
\quad\quad\text{and}\quad\quad
\mu' =\max_{i\in \{1,2,\dots,|V|\}} \mu_i.
\eeqn

%Note that if~$H$ is $k$-uniform, then the above maxima can be taken over $i\leq k$.

The polynomial concentration inequality of Kim and Vu is given as follows:

\begin{theorem} \label{thm:kimvu}
    For every $k\in \N$, there exist constants $C,C'>0$ such that the following holds.
    Let $H = (V,E)$ be an $n$-vertex hypergraph whose edges have size at most~$k$, and let~$f$ be given by~\eqref{eq:fkimvu}.
    Then, for any $\lambda >1$, we have
    \beqn
    \Pr\big[ |f(X) - \mu_0| > C\lambda^{k - \frac{1}{2}}\sqrt{\mu\mu'}] \leq C'\exp\big(-\lambda + (k-1)\log n\big).
    \eeqn
\end{theorem}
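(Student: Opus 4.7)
The plan is to prove this by induction on the maximum edge size $k$. For the base case $k=1$, the polynomial $f(X)=\sum_{e\in E}\prod_{v\in e}X_v$ is a positive linear combination of independent Bernoulli variables, and the claimed bound follows immediately from the standard Chernoff--Hoeffding inequality (which in fact yields subgaussian tails stronger than what is stated).

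For the inductive step, I would work with the Doob martingale $M_i = \Exp[f(X)\mid X_1,\ldots,X_i]$, revealing the variables one at a time. The key structural observation is that for each $v\in V$, the partial-derivative polynomial $f_{\{v\}}$ is itself a polynomial of degree at most $k-1$, and its own $\mu_j$-statistics are dominated by the corresponding $\mu_{j+1}$ of $f$; in particular, the quantities playing the roles of $\mu$ and $\mu'$ for $f_{\{v\}}$ are both bounded by the $\mu'$ of $f$. Applying the inductive hypothesis to each $f_{\{v\}}$ yields a high-probability upper bound roughly of the shape $f_{\{v\}}(X) \leq \mu_1 + O\bigl(\lambda^{k-3/2}\mu'\bigr)$. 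These pointwise bounds on the partial derivatives control the size of the martingale increments $|M_i - M_{i-1}|$ after a suitable truncation step, and an Azuma--Hoeffding-type argument applied to the truncated martingale then yields the desired concentration of $f(X)$ about $\mu_0$. The factor $\sqrt{\mu\mu'}$ in the final bound comes out naturally as the geometric mean of the typical size $\mu_0$ of $f$ and the typical size $\mu'$ of each partial derivative, the latter controlling the variance contribution of a single coordinate.

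The main technical obstacle will be calibrating the truncation threshold against the tail bound inherited from the induction: one has to choose it so that the $n$ exceptional events (one per partial derivative $f_{\{v\}}$) contribute only the $(k-1)\log n$ term in the exponent via a union bound, while the surviving truncated martingale remains concentrated with exponent $-\lambda$. An alternative and somewhat cleaner route is the polynomial-moment method of Schudy and Sviridenko, which bypasses much of the martingale bookkeeping at the cost of slightly worse constants; either approach produces a bound of the form claimed, with constants $C, C'$ depending only on $k$.
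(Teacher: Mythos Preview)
The paper does not give its own proof of this theorem: it is quoted as the ``polynomial concentration inequality of Kim and Vu''~\cite{KimV:2000} and used as a black box. So there is no in-paper proof to compare against.

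That said, your outline is a faithful sketch of the original Kim--Vu argument: induction on the maximum degree~$k$, with the inductive hypothesis applied to the partial derivatives $f_{\{v\}}$ (whose $\mu$-parameters are indeed controlled by the $\mu'$ of~$f$), followed by a truncated martingale/Azuma step where the union bound over the $n$ coordinates produces the $(k-1)\log n$ loss in the exponent. The calibration issue you flag is exactly the delicate point in the original proof. Your mention of the Schudy--Sviridenko moment method as an alternative is also accurate, though note it gives a somewhat different form of the bound. As a proof sketch of a cited result this is fine; just be aware that for the purposes of this paper you only need the statement, not a reproof.
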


To suit our needs, we will use a slight variant of this result, which follows easily from it and the following basic proposition.

\begin{proposition}\label{prop:set_bern}
Let $f:\bset{n}\to \R_+$ be a monotone increasing function and $p\in (\frac{16}{n},1)$.
Then, for any integer $0\leq t\leq pn/2$, 
\beqn
\Exp_{S\in {[n]\choose t}}f(1_S) \leq \frac{1}{2}\, \Exp_{X\sim\Bern(p)^n}f(X).
\eeqn
\end{proposition}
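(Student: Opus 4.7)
The plan is to condition on the size $|X|$ of the support of $X\sim \Bern(p)^n$, and use monotonicity of $f$ together with the fact that, given $|X|=s\ge t$, the support is uniform on $\binom{[n]}{s}$ and hence ``dominates'' a uniformly random $t$-subset.

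First I would observe the following coupling fact: if $s\ge t$ and $S$ is uniform on $\binom{[n]}{s}$, and then $S'$ is chosen uniformly from $\binom{S}{t}$, then $S'$ is uniform on $\binom{[n]}{t}$. Since $1_{S'}\le 1_S$ coordinatewise and $f$ is monotone, we have $f(1_{S'})\le f(1_S)$ pointwise, and so
\beqn
\Exp_{S\in \binom{[n]}{t}} f(1_S) \;=\; \Exp_{S\in \binom{[n]}{s}}\Exp_{S'\in \binom{S}{t}} f(1_{S'}) \;\leq\; \Exp_{S\in \binom{[n]}{s}} f(1_S).
\eeqn
Next I would decompose
\beqn
\Exp_{X\sim\Bern(p)^n} f(X) \;=\; \sum_{s=0}^{n}\Pr[|X|=s]\,\Exp_{S\in \binom{[n]}{s}} f(1_S),
\eeqn
using that $X$ conditioned on $|X|=s$ is uniform on $\binom{[n]}{s}$. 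Restricting the sum to $s\ge t$ and applying the previous inequality gives
\beqn
\Exp_{X\sim\Bern(p)^n} f(X) \;\geq\; \Pr\big[|X|\ge t\big]\cdot \Exp_{S\in \binom{[n]}{t}} f(1_S).
\eeqn

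The remaining task is to lower bound $\Pr[|X|\ge t]$ by $1/2$. Since $|X|$ is a sum of $n$ independent $\Bern(p)$ variables with mean $pn$, and $t\le pn/2$, a standard multiplicative Chernoff bound gives
\beqn
\Pr\big[|X| < pn/2\big] \;\leq\; \exp\!\big(-pn/8\big).
\eeqn
The hypothesis $p>16/n$ then yields $\Pr[|X|<t]\le e^{-2}<1/2$, so $\Pr[|X|\ge t]\ge 1/2$, and the proposition follows by rearranging.

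The ``hard'' part is really only the coupling/monotonicity step, but this is essentially routine once one notices that a uniform $t$-subset can be sampled as a uniform $t$-subset of a uniform $s$-subset for any $s\ge t$; the Chernoff estimate is completely standard and is what forces the lower bound $p>16/n$ (any constant larger than $8\ln 2$ would do).
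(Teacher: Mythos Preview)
Your proof is correct and is essentially identical to the paper's: both condition on the size of the support of~$X$, use monotonicity to bound $\Exp_{S\in\binom{[n]}{s}}f(1_S)\ge \Exp_{S\in\binom{[n]}{t}}f(1_S)$ for $s\ge t$, and then apply a Chernoff bound to show $\Pr[|X|\ge t]\ge \tfrac12$. The only difference is cosmetic---you spell out the coupling that justifies the monotonicity step, whereas the paper simply cites ``monotonicity of~$f$''. (Note that both arguments actually yield $\Exp_S f(1_S)\le 2\,\Exp_X f(X)$; the constant $\tfrac12$ in the displayed statement is evidently a typo for~$2$.)
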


\begin{proof}
By direct calculation,
\begin{align*}
    \Exp_{X\sim\Bern(p)^n}f(X) &= \sum_{i=0}^n p^i(1-p)^{n-i}\sum_{S\in {[n]\choose i}}f(1_S)\\
    &= \sum_{i=0}^n{n\choose i} p^i(1-p)^{n-i}\, \Exp_{S\in {[n]\choose i}} f(1_S)\\
    &\geq \sum_{i\geq t}{n\choose i} p^i(1-p)^{n-i}\, \Exp_{S\in {[n]\choose t}}f(1_S)\\
    &\geq \frac{1}{2}\,\Exp_{S\in {[n]\choose t}}f(1_S),
\end{align*}
where in the third  line we used monotonicity of~$f$ and the fourth line follows from the Chernoff bound.
\end{proof}

\begin{corollary}\label{cor:kimvu}
For every $k\in \N$, there exist constants $C, C'>0$ such that the following holds.
Let $H = (V,E)$ be an $n$-vertex hypergraph whose edges have size at most~$k$, let~$f$ be given as in~\eqref{eq:fkimvu} and let $p\in (\frac{16}{n},1)$.
%Suppose that for~$f$ as in~\eqref{eq:fkimvu}, we have that $\mu = \mu_0$.
Then, for any integer $0\leq t\leq pn/2$, we have
\beqn
\Pr_{S\in {V\choose t}}\big[ f(1_S) \geq C(\log n)^{k - \frac{1}{2}}\mu] \leq \frac{C'}{n^4}.
\eeqn
\end{corollary}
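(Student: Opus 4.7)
The plan is to deduce Corollary~\ref{cor:kimvu} in two short moves: first a Bernoulli-to-uniform-size-$t$ reduction via Proposition~\ref{prop:set_bern}, then an application of Kim--Vu (Theorem~\ref{thm:kimvu}) to the Bernoulli distribution with a near-logarithmic choice of deviation parameter~$\lambda$.

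The first observation is that the polynomial $f$ in~\eqref{eq:fkimvu} has nonnegative coefficients (in fact, each monomial is a product of coordinates from $\bset{V}$), so $f$ is monotone increasing in the coordinate-wise order. Consequently, for any threshold $\tau>0$ the indicator $g(x) = \mathbf{1}[f(x)\geq \tau]$ is also monotone increasing. Applied to $g$, Proposition~\ref{prop:set_bern} yields
\beqn
\Pr_{S\in \binom{V}{t}}\big[f(1_S)\geq \tau\big]
= \Exp_{S\in \binom{V}{t}} g(1_S)
\leq \tfrac{1}{2}\, \Exp_{X\sim\Bern(p)^V} g(X)
= \tfrac{1}{2}\Pr_{X\sim\Bern(p)^V}\big[f(X)\geq \tau\big],
\eeqn
reducing the task to bounding the Bernoulli-model upper tail.

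For the second move I would feed this into Kim--Vu. Note that $\mu_0\leq \mu$ and $\mu'\leq \mu$, so $\sqrt{\mu\mu'}\leq \mu$. Set $\lambda = (k+5)\log n$; then the Kim--Vu tail bound gives
\beqn
\Pr_{X\sim \Bern(p)^V}\Big[\,|f(X)-\mu_0|\,>\,C_{\text{KV}}\,((k+5)\log n)^{k-\frac12}\,\mu\,\Big] \;\leq\; C'_{\text{KV}}\,n^{-6}.
\eeqn
Choosing the constant $C$ in the statement of the corollary large enough that $C(\log n)^{k-1/2}\mu - \mu_0 \geq C_{\text{KV}}((k+5)\log n)^{k-1/2}\mu$ (which holds once $C \geq C_{\text{KV}}(k+5)^{k-1/2} + 1$, using $\mu_0\leq \mu$), the event $f(X)\geq C(\log n)^{k-1/2}\mu$ is contained in the Kim--Vu tail event above. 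Combining with the reduction step then gives the claimed bound $\tfrac{C'}{n^4}$ with room to spare.

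There is no real obstacle here; the only small subtlety is the bookkeeping to absorb the additive shift $\mu_0$ into the multiplicative threshold $C(\log n)^{k-1/2}\mu$, which is what forces $C$ to depend on $k$ and also what makes us choose $\lambda$ as a slightly larger multiple of $\log n$ so that the final tail probability decays like a fixed negative power of~$n$. The hypothesis $p > 16/n$ and $t\leq pn/2$ is needed only to invoke Proposition~\ref{prop:set_bern}; the Kim--Vu step itself imposes no further constraints on~$p$ or~$t$.
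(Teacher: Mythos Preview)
Your proposal is correct and follows essentially the same route as the paper: reduce from the uniform-$t$-subset model to the Bernoulli model via Proposition~\ref{prop:set_bern} using monotonicity of~$f$, then apply Theorem~\ref{thm:kimvu} with $\lambda$ a suitable multiple of $\log n$ and use $\mu_0,\mu'\leq \mu$ to absorb the threshold into the form $C(\log n)^{k-1/2}\mu$. The only cosmetic difference is that the paper takes $\lambda=(3+k)\log n$ to land exactly at $n^{-4}$, whereas you take $\lambda=(k+5)\log n$ and get $n^{-6}$; either choice works.
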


\begin{proof}
For a sufficiently large constant~$C =C(k)>0$, let $g:\bset{n}\to \bset{}$ be the indicator function
\beqn
g(1_S) = \one\big[ f(1_S) \geq C(\log n)^{k - \frac{1}{2}}\mu].
\eeqn
Since~$f$ is monotone, so is~$g$.
Setting~$\lambda = (3+k)\log n$, it follows from Theorem~\ref{thm:kimvu} that
\beqn
\Exp_{X\sim\Bern(p)^n}\,g(X) \leq \frac{C'}{n^4}.
\eeqn
The result now follows from Proposition~\ref{prop:set_bern}.
\end{proof}

\section{The main argument}

In this section we will prove Theorem~\ref{thm:main}, our main result.
It will be more convenient to shift attention from the degree $t$ of intersectivity to the length $k:=t+1$ of the associated arithmetic progressions.

Fix then an integer $k\geq 3$ for the length of the progressions and a positive parameter $\eps>0$.
Let~$G$ be an additive group with $N$ elements, where~$N$ is coprime to~$(k-1)!$ and is assumed to be sufficiently large relative to~$k$ and~$\eps$ for our arguments to hold.
Recall that we wish to show that
$m_{k-1, \eps}^*(G) \ll_{k,\eps} (\log N)^{2k+1} N^{1-\frac{2}{k}}$.
%which is the same as proving the implication
%\beqn
%    \Pr_{S\in \binom{G}{m}} \big[S \text{ is \textbf{not} $(k-1, \eps)$-intersective}\big] \geq \frac{1}{2} \implies m \ll_{k,\eps} (\log N)^{2k+1} N^{1-\frac{2}{k}}.
%\eeqn

Instead of considering random intersective sets, it will be simpler to consider random \emph{intersective sequences}, where a sequence in~$G^m$ is $(k-1, \eps)$-intersective if the set of its distinct elements is.
Clearly, the probability that a uniformly random $m$-element sequence is $(k-1, \eps)$-intersective is at most the probability that a uniform $m$-element set is.
Since we are interested in proving upper bounds on the critical size, it suffices to bound the minimal~$m$ such that a random sequence in~$G^m$ is $(k-1, \eps)$-intersective with probability at least~$1/2$.

\subsection{Reducing to an inequality about sign averages}

Given a sequence of differences $D = (d_1, \dots, d_m) \in G^m$ and some set~$A\subseteq G$, let~$\Lambda_D(A)$ be the normalized count of $k$-APs with common difference in~$D$ which are contained in~$A$:
\beqn
\Lambda_D(A) = \Exp_{i\in [m]} \Exp_{x\in G} \prod_{\ell=0}^{k-1} A(x + \ell d_i).
\eeqn
Similarly, we denote by~$\Lambda_G(A)$ the proportion of all $k$-APs which are contained in~$A$:
\beqn
\Lambda_G(A) = \Exp_{d\in G} \Exp_{x\in G} \prod_{\ell=0}^{k-1} A(x + \ell d).
\eeqn

For the rest of the proof we will assume that $m\in[N]$ is an integer for which
\beq\label{eq:Sz_failure}
\Pr_{D\in G^m}\big(\exists A\subseteq G:\: |A|\geq \eps N,\, \Lambda_D(A) = 0\big) > 1/2,
\eeq
meaning that a random $m$-element sequence is unlikely to be $(k-1, \eps)$-intersective.
Note that this inequality holds whenever $m < m_{k-1,\eps}^*(G)$.
Our first task will be to reduce equation~\eqref{eq:Sz_failure} to an inequality concerning sign averages (equation~\eqref{eq:even_XOR} below), for which we have more tools at our disposal.

By a suitable generalization of Szemer\'edi's theorem, we know that
\beq\label{eq:Szemeredi}
\Lambda_G(A) \gg_{k,\eps} 1 \quad \text{for all $A\subseteq G$ with $|A|\geq \eps N$.}
\eeq
This can be proven, for instance, by using the \emph{hypergraph removal lemma} of Gowers~\cite{Gowers:2007} and Nagle, R\"odl, Schacht and Skokan~\cite{RodlS:2004, NagleRS:2006}.
Noting that $\Exp_{D'\in G^m} \Lambda_{D'}(A) = \Lambda_G(A)$, by combining inequalities~\eqref{eq:Sz_failure} and~\eqref{eq:Szemeredi} we conclude that
\beq\label{eq:outlaw}
\Exp_{D\in G^m} \max_{A\subseteq G:\: |A|\geq \eps N} \big|\Lambda_D(A) - \Exp_{D'\in G^m} \Lambda_{D'}(A)\big| \gg_{k,\eps} 1.
\eeq
Below, we will no longer need the condition that $|A|\geq \eps N$ in maxima over $A\subseteq G$.
This positive density assumption is only used through~\eqref{eq:Szemeredi}.

We next apply a simple symmetrization argument given in~\cite[page~8690]{BrietG:2020} to write this in a more convenient form:

\begin{lem}[Symmetrization]\label{lem:symmetrization}
Let~$c>0$, and suppose that
\beqn
\Exp_{D\in G^m} \max_{A\subseteq G} \big|\Lambda_D(A) - \Exp_{D'\in G^m} \Lambda_{D'}(A)\big| \geq c.
\eeqn
Then
\beqn
\Exp_{D\in G^m} \Exp_{\sigma\in \pmset{m}} \max_{A\subseteq G} \bigg|\Exp_{i\in [m]} \Exp_{x\in G} \,\sigma_i \prod_{\ell=0}^{k-1} A(x + \ell d_i)\bigg| \geq \frac{c}{2}.
\eeqn
\end{lem}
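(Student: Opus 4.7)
The plan is to carry out the classical Gin\'e–Zinn symmetrization trick, which makes the bilinearity of the random signs appear naturally via an independent copy of the difference sequence.

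First I would write $f_A(d) = \Exp_{x\in G} \prod_{\ell=0}^{k-1} A(x+\ell d)$, so that $\Lambda_D(A) = \Exp_{i\in[m]} f_A(d_i)$ and $\Exp_{D'\in G^m} \Lambda_{D'}(A) = \Exp_{d\in G} f_A(d) = \Lambda_G(A)$. Then I would introduce an independent copy $D' \in G^m$ and use Jensen's inequality inside the absolute value:
\beqn
\big|\Lambda_D(A) - \Lambda_G(A)\big|
= \big|\Exp_{D'}[\Lambda_D(A) - \Lambda_{D'}(A)]\big|
\leq \Exp_{D'} \big|\Lambda_D(A) - \Lambda_{D'}(A)\big|.
\eeqn
Taking $\max_A$ and then $\Exp_D$, and using that $\max_A \Exp_{D'}(\cdot) \leq \Exp_{D'} \max_A(\cdot)$, the hypothesis becomes
\beqn
\Exp_{D, D'\in G^m} \max_{A\subseteq G} \big|\Lambda_D(A) - \Lambda_{D'}(A)\big| \geq c.
\eeqn

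Next I would exploit exchangeability. Since the pairs $(d_i, d'_i)$ are i.i.d.\ and each is exchangeable, the joint distribution of $(D, D')$ is invariant under swapping $d_i \leftrightarrow d'_i$ independently for each~$i$. Hence, for every sign vector $\sigma \in \{-1,1\}^m$,
\beqn
\Exp_{D, D'} \max_{A} \Big|\Exp_i \big(f_A(d_i) - f_A(d'_i)\big)\Big|
= \Exp_{D, D'} \max_{A} \Big|\Exp_i \sigma_i \big(f_A(d_i) - f_A(d'_i)\big)\Big|,
\eeqn
and averaging over a uniform random $\sigma \in \{-1,1\}^m$ preserves the inequality.

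Finally I would split by the triangle inequality, $|\Exp_i \sigma_i (f_A(d_i) - f_A(d'_i))| \leq |\Exp_i \sigma_i f_A(d_i)| + |\Exp_i \sigma_i f_A(d'_i)|$, then pass $\max_A$ through the sum, and use that $D$ and $D'$ have the same distribution to collapse the two terms. This yields
\beqn
c \leq 2\, \Exp_{D\in G^m} \Exp_{\sigma\in \pmset{m}} \max_{A\subseteq G} \Big|\Exp_{i\in[m]} \sigma_i f_A(d_i)\Big|,
\eeqn
which is the claim after dividing by~$2$ and unfolding $f_A$. No step should be a serious obstacle; the only mild subtlety is being careful about the direction of the $\max/\Exp$ swap (which goes the favorable way here because $\max$ is convex) and about the legitimacy of inserting the $\sigma_i$ inside the max, which follows from the pairwise exchangeability rather than any symmetry of $f_A$ itself.
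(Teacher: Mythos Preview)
Your proposal is correct and follows essentially the same approach as the paper: introduce an independent copy $D'$, use convexity to pass to $\Exp_{D,D'}\max_A|\Lambda_D(A)-\Lambda_{D'}(A)|$, exploit the exchangeability of each pair $(d_i,d_i')$ to insert the random signs $\sigma_i$, and finish with the triangle inequality plus the identical distribution of $D$ and $D'$. The paper phrases the first step as ``by the triangle inequality'' and the second as ``independent and symmetrically distributed,'' but the argument is the same standard symmetrization you describe.
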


\begin{proof}
With slight abuse of notation, for each $d\in G$ and $A\subseteq G$, denote
\beqn
\Lambda_d(A) = \Exp_{x\in G}\prod_{\ell=0}^{k-1}A(x + \ell d).
\eeqn
Then, by the triangle inequality, the first expression in Lemma~\ref{lem:symmetrization} is bounded from above by
\beqn
\Exp_{D,D'\in G^m} \max_{A\subseteq G} \Big|\frac{1}{m}\sum_{i=1}^m \big(\Lambda_{d_i}(A) - \Lambda_{d_i'}(A)\big)\Big|,
\eeqn
where $D$ and $D'$ are independent and uniformly distributed.
Since the terms $\Lambda_{d_i}(A) - \Lambda_{d_i'}(A)$ are independent and symmetrically distributed, this expectation is unchanged if each of these terms is multiplied by an arbitrary sign.
In particular, this expectation equals
\beqn
\Exp_{D,D'\in G^m} \Exp_{\sigma\in \pmset{m}}\max_{A\subseteq G} \Big|\frac{1}{m}\sum_{i=1}^m \sigma_i\big(\Lambda_{d_i}(A) - \Lambda_{d_i'}(A)\big)\Big|.
\eeqn
Using the triangle inequality again, and the fact that $D$ and $D'$ have the same distribution, we get that this is bounded from above by
\beqn
2\Exp_{D\in G^m} \Exp_{\sigma\in \pmset{m}}\max_{A\subseteq G} \Big|\frac{1}{m}\sum_{i=1}^m \sigma_i\Lambda_{d_i}(A) \Big|.
\eeqn
This proves the result.
\end{proof}

The appearance of the expectation over signs $\sigma\in \pmset{m}$ is crucial to our arguments.
By an easy multilinearity argument, we can replace the set~$A\subseteq G$ (which can be seen as a vector in~$\{0, 1\}^G$) by a vector $Z\in \pmset{G}$.
In combination with~\eqref{eq:outlaw} and Lemma~\ref{lem:symmetrization}, this gives
\beq\label{eq:even_XOR}
\Exp_{D\in G^m} \Exp_{\sigma\in \pmset{m}} \max_{Z\in \pmset{G}} \bigg|\Exp_{i\in [m]} \Exp_{x\in G} \,\sigma_i \prod_{\ell=0}^{k-1} Z(x + \ell d_i)\bigg| \gg_{k,\eps} 1.
\eeq
The change from~$\{0, 1\}^G$ to~$\pmset{G}$ is a convenient technicality so that we can ignore terms which get squared in a product.

\subsection{Dealing with an odd number of terms}

The last inequality~\eqref{eq:even_XOR} is what we need to prove the result for even values of~$k$ using the arguments we will outline below.
For odd values of~$k$, however, this inequality is unsuited due to the odd number of factors inside the product.
The main idea from~\cite{AlrabiahGKM:2022} to deal with this case is to apply a ``Cauchy-Schwarz trick'' to
obtain a better suited inequality:

\begin{lem}[Cauchy-Schwarz trick]\label{lem:CS}
Let~$c>0$, and suppose~$m\geq 2/c^2$ is an integer for which
\beqn
\Exp_{D\in G^m} \Exp_{\sigma\in \pmset{m}} \max_{Z\in \pmset{G}} \bigg|\Exp_{i\in [m]} \Exp_{x\in G} \,\sigma_i \prod_{\ell=0}^{k-1} Z(x + \ell d_i)\bigg| \geq c.
\eeqn
Then there exists a partition~$[m] = L \,\uplus\, R$ such that
\beqn
\Exp_{D\in G^m} \Exp_{\substack{\sigma\in \pmset{L}\\ \tau\in\pmset{R}}} \max_{Z\in \pmset{G}} \sum_{\substack{i\in L\\ j\in R}} \sum_{x\in G} \sigma_i \tau_j \prod_{\ell=1}^{k-1} Z(x + \ell d_i) Z(x + \ell d_j) \geq \frac{c^2 m^2 N}{8}.
\eeqn
\end{lem}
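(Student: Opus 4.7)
The plan is to pass from the order-$k$ correlation in the hypothesis to an order-$2(k-1)$ correlation (as in the conclusion) by applying a single Cauchy--Schwarz step in the $x$ variable, and then to separate the indices $i$ and $j$ using a random partition of $[m]$. Set $F(D,\sigma) := \max_{Z \in \pmset{G}} \bigl|\sum_{i\in[m]}\sum_{x\in G}\sigma_i \prod_{\ell=0}^{k-1} Z(x+\ell d_i)\bigr|$. The hypothesis, after clearing the $1/(mN)$ normalization, reads $\Exp_{D,\sigma}F(D,\sigma) \geq cmN$, so Jensen's inequality yields
\beqn
c^2 m^2 N^2 \;\leq\; \Exp_{D,\sigma}\, F(D,\sigma)^2.
\eeqn

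For each fixed $Z$, write $\sum_{i,x}\sigma_i\prod_{\ell=0}^{k-1}Z(x+\ell d_i) = \sum_x Z(x)\,g_{D,\sigma,Z}(x)$ with $g_{D,\sigma,Z}(x)=\sum_{i}\sigma_i\prod_{\ell=1}^{k-1}Z(x+\ell d_i)$. Since $\|Z\|_2^2 = N$, Cauchy--Schwarz gives
\beqn
\Bigl(\sum_x Z(x)\,g(x)\Bigr)^{\!2} \;\leq\; N \sum_x g(x)^2 \;=\; N\sum_{i,j\in[m]} \sigma_i\sigma_j\, S_{i,j}(Z),
\eeqn
where $S_{i,j}(Z) := \sum_{x\in G}\prod_{\ell=1}^{k-1}Z(x+\ell d_i)Z(x+\ell d_j)$. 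Since the right-hand side is non-negative for every $Z$, taking the max over $Z$ on both sides preserves the inequality (the maximum of the square equals the square of the maximum of the absolute value), and so $F(D,\sigma)^2 \leq N\max_Z \sum_{i,j}\sigma_i\sigma_j S_{i,j}(Z)$. Combining with the display above,
\beqn
c^2 m^2 N \;\leq\; \Exp_{D,\sigma}\max_{Z\in\pmset G}\sum_{i,j\in[m]}\sigma_i\sigma_j\, S_{i,j}(Z).
\eeqn

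Next I would split the quadratic form into diagonal and off-diagonal parts. On the diagonal, $S_{i,i}(Z) = N$ (as $Z(x)^2 = 1$), so the diagonal contribution is exactly $mN$. For the off-diagonal part, let $L\uplus R=[m]$ be a uniformly random partition (each index placed in $L$ with probability $1/2$ independently). Using $S_{i,j}=S_{j,i}$, one checks that
\beqn
\sum_{i\neq j}\sigma_i\sigma_j\,S_{i,j}(Z) \;=\; 4\,\Exp_{L,R}\sum_{i\in L,\,j\in R}\sigma_i\sigma_j\,S_{i,j}(Z),
\eeqn
since for $i\neq j$ the event $\{i\in L, j\in R\}$ has probability $1/4$. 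Pulling $\max_Z$ inside the expectation over $(L,R)$ and subtracting the diagonal $mN$ yields
\beqn
c^2 m^2 N - mN \;\leq\; 4\,\Exp_{L,R}\,\Exp_{D,\sigma}\max_Z\sum_{i\in L,\,j\in R}\sigma_i\sigma_j\,S_{i,j}(Z).
\eeqn
The hypothesis $m\geq 2/c^2$ ensures $mN\leq \tfrac12 c^2m^2N$, so the left-hand side is at least $c^2 m^2 N/2$. Averaging over $(L,R)$ then furnishes a \emph{fixed} partition $[m] = L\uplus R$ meeting the required bound $c^2m^2N/8$, after which the sign vector $\sigma\in\pmset{m}$ factorizes as an independent pair $(\sigma,\tau)\in\pmset{L}\times\pmset{R}$, matching the stated form.

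The only subtle step is the Cauchy--Schwarz move: one has to apply it in the $x$ variable (not the $i$ variable), exploiting that $Z$ is $\pm 1$-valued so that the resulting $S_{i,i}$ equals $N$ exactly, which is what makes the diagonal contribution computable and independent of $Z$. Once this is set up, the random partition trick and the choice $m\geq 2/c^2$ are routine.
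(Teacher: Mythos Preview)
Your proof is correct and follows essentially the same approach as the paper: Cauchy--Schwarz in the $x$ variable to eliminate the $\ell=0$ factor, Jensen (equivalently, a second Cauchy--Schwarz) to pass from $c$ to $c^2$, then a random partition to decouple the off-diagonal sum, with the diagonal $S_{i,i}(Z)=N$ handled via the hypothesis $m\ge 2/c^2$. The only cosmetic difference is that the paper applies the $x$-Cauchy--Schwarz before the Jensen step rather than after, but the logic and constants are identical.
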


\begin{proof}
By Cauchy-Schwarz, for any $Z\in \pmset{G}$ we have
\begin{align*}
    \bigg|\Exp_{i\in [m]} \Exp_{x\in G} \,\sigma_i \prod_{\ell=0}^{k-1} Z(x + \ell d_i)\bigg|^2
    &= \bigg|\Exp_{x\in G} \,Z(x) \cdot \bigg(\Exp_{i\in [m]} \sigma_i \prod_{\ell=1}^{k-1} Z(x + \ell d_i)\bigg)\bigg|^2 \\
    &\leq \big(\Exp_{x\in G} \,Z(x)^2\big) \Exp_{x\in G} \bigg(\Exp_{i\in [m]} \sigma_i \prod_{\ell=1}^{k-1} Z(x + \ell d_i)\bigg)^2 \\
    &= \Exp_{x\in G} \Exp_{i, j\in [m]} \,\sigma_i  \sigma_j \prod_{\ell=1}^{k-1} Z(x + \ell d_i) Z(x + \ell d_j).
\end{align*}
Applying Cauchy-Schwarz again, we conclude from our assumption that
\begin{align*}
    c^2 &\leq \Exp_{D\in G^m} \Exp_{\sigma\in \pmset{m}} \max_{Z\in \pmset{G}} \bigg|\Exp_{i\in [m]} \Exp_{x\in G} \,\sigma_i \prod_{\ell=0}^{k-1} Z(x + \ell d_i)\bigg|^2 \\
    &\leq \Exp_{D\in G^m} \Exp_{\sigma\in \pmset{m}} \max_{Z\in \pmset{G}} \Exp_{x\in G} \Exp_{i, j\in [m]} \,\sigma_i  \sigma_j \prod_{\ell=1}^{k-1} Z(x + \ell d_i) Z(x + \ell d_j).
\end{align*}

Now consider a uniformly random partition~$[m] = L \,\uplus\, R$, so that for any $i, j\in [m]$ with $i\neq j$ we have $\Pr_{L, R}(i\in L,\, j\in R) = 1/4$;
then
\begin{align*}
    \Exp_{i, j\in [m]} &\,\sigma_i  \sigma_j \prod_{\ell=1}^{k-1} Z(x + \ell d_i) Z(x + \ell d_j) \\
    &= \frac{1}{m^2} \sum_{\substack{i, j = 1 \\ i\neq j}}^{m} \sigma_i  \sigma_j \prod_{\ell=1}^{k-1} Z(x + \ell d_i) Z(x + \ell d_j) + \frac{1}{m^2} \sum_{i=1}^{m} \sigma_i^2 \prod_{\ell=1}^{k-1} Z(x + \ell d_i)^2 \\
    &= \frac{4}{m^2} \Exp_{L, R} \sum_{i\in L, j\in R} \sigma_i  \sigma_j \prod_{\ell=1}^{k-1} Z(x + \ell d_i) Z(x + \ell d_j) + \frac{1}{m}.
\end{align*}
It follows that
\beqn
    c^2 \leq \frac{1}{m} + \frac{4}{m^2} \Exp_{L, R} \Exp_{D\in G^m} \Exp_{\sigma\in \pmset{m}} \max_{Z\in \pmset{G}} \Exp_{x\in G} \sum_{i\in L, j\in R} \sigma_i  \sigma_j \prod_{\ell=1}^{k-1} Z(x + \ell d_i) Z(x + \ell d_j).
\eeqn
Using that $m\geq 2/c^2$, we conclude there exists a choice of partition $[m] = L \,\uplus\, R$ satisfying the conclusion of the lemma.
\end{proof}

From now on we assume that~$k$ is odd, and write~$k=2r+1$.\footnote{The even case is similar but simpler. We focus on the odd case here because this is where we get new bounds.} 
For $i, j\in [m]$, denote $P_i(x) = \{x+d_i, x+2d_i, \dots, x+2rd_i\}$ and $P_{ij}(x) = P_i(x) \cup P_j(x)$, where we hide the dependence on the difference set~$D$ for ease of notation.
From inequality~\eqref{eq:even_XOR} and Lemma~\ref{lem:CS} we conclude that
\beq\label{eq:odd_XOR}
\Exp_{D\in G^m} \Exp_{\substack{\sigma\in \pmset{L}\\ \tau\in\pmset{R}}} \max_{Z\in \pmset{G}} \sum_{\substack{i\in L\\ j\in R}} \sum_{x\in G} \sigma_i \tau_j \prod_{y\in P_{ij}(x)} Z(y) \gg_{k,\eps} m^2 N,
\eeq
where~$(L, R)$ is a suitable partition of the index set~$[m]$ and we assume (without loss of generality) that~$m$ is sufficiently large depending on~$\eps$ and~$k$.

From inequality~\eqref{eq:odd_XOR} it follows that we can fix a ``good'' set~$D\in G^m$ satisfying
\beq\label{eq:large_exp}
\Exp_{\substack{\sigma\in \pmset{L}\\ \tau\in\pmset{R}}} \max_{Z\in \pmset{G}} \sum_{\substack{i\in L\\ j\in R}} \sigma_i \tau_j \sum_{x\in G} \prod_{y\in P_{ij}(x)} Z(y) \gg_{k,\eps} m^2 N
\eeq
and for which we have the technical conditions 
\begin{align}
    \big| \big\{i\in L, j\in R:\: |P_{ij}(0)| \neq 4r\big\} \big| &\ll_{k,\eps} m^2/N \quad \text{and} \label{eq:distinct} \\
    \max_{x\neq 0} \sum_{i=1}^m \sum_{\ell = -2r}^{2r} \one\{\ell d_i = x\} &\ll \log N, \label{eq:multiplicity}
\end{align}
which are needed to bound the probability of certain bad events later on.
Indeed:
%choose a sequence $D\in G^m$ uniformly at random, each of the inequalities \eqref{eq:large_exp}-\eqref{eq:multiplicity} above individually holds with probability at least $3/4$:
\begin{itemize}
    \item Denote by $X(D)$ the expression on the left-hand side of~\eqref{eq:large_exp} for a given sequence $D\in G^m$.
    Then $X(D) \leq m^2N$ always holds, while by equation~\eqref{eq:odd_XOR} we have $\Exp_{D\in G^m} X(D) \geq c_{k,\eps} m^2N$ for some constant $c_{k,\eps} > 0$.
    It follows that
    \beqn
        \Pr\big[X(D) \geq c_{k,\eps} m^2N/2\big] > c_{k,\eps}/2.
    \eeqn
    
    \item For $\ell, \ell'\in [2r]$ and independent uniform $d_i, d_j\in G$, we have that
    \beqn
        \Pr[\ell d_i = \ell' d_j] = 1/N.
    \eeqn
    The expectation of the left-hand side of~\eqref{eq:distinct} (taken with respect to $D \in G^m$) is then
    \beqn
    \sum_{\substack{i\in L\\ j\in R}} \Pr\big[|P_{ij}(0)|\neq 4r\big] \leq \sum_{\substack{i\in L\\ j\in R}} \sum_{\ell, \ell' = 1}^{2r} \Pr[\ell d_i = \ell' d_j] \leq \frac{r^2 m^2}{N},
    \eeqn
    and thus by Markov's inequality
    \beqn
        \Pr\bigg[\big| \big\{i\in L, j\in R:\: |P_{ij}(0)| \neq 4r\big\} \big| \leq \frac{4}{c_{k,\eps}} \frac{r^2 m^2}{N}\bigg] \geq 1- \frac{c_{k,\eps}}{4}.
    \eeqn
    
    \item For a fixed~$x\neq 0$, the inner sum in~\eqref{eq:multiplicity} is an indicator random variable that equals~1 with probability $4r/N$.
    Since these random variables are independent for different~$i\in [m]$, the Chernoff bound implies that
    \beqn
        \Pr \bigg[\sum_{i=1}^m \sum_{\ell = -2r}^{2r} \one\{\ell d_i = x\} \geq (1+\delta) \frac{4rm}{N}\bigg] \leq \bigg(\frac{e^{\delta}}{(1+\delta)^{1+\delta}}\bigg)^{4rm/N}.
    \eeqn
    Setting $1+\delta = N\log N/(rm)$, the union bound over all $x\neq 0$ gives that
    %\eqref{eq:multiplicity} holds with probability at least $1 - o(1)$.
    \beqn
        \Pr \bigg[\max_{x\neq 0} \sum_{i=1}^m \sum_{\ell = -2r}^{2r} \one\{\ell d_i = x\} \leq 4\log N\bigg] \geq 1 - \frac{1}{N^3} > 1 - \frac{c_{k,\eps}}{4}.
    \eeqn
\end{itemize}
By the union bound, all three conditions above will hold simultaneously with positive probability, as wished.

\subsection{Reducing to a matrix inequality problem}

The next key idea is to construct matrices~$M_{ij}$ for which the quantity
\beq\label{eq:infty_one}
\Exp_{\substack{\sigma\in \pmset{L}\\ \tau\in\pmset{R}}} \bigg\| \sum_{i\in L, j\in R} \sigma_i \tau_j M_{ij} \bigg\|_{\infty\to 1}
\eeq
is related to the expression on the left-hand side of inequality~\eqref{eq:large_exp}.
The reason for doing so is that this allows us to use strong \emph{matrix concentration inequalities}, which can be used to obtain a good upper bound on the expectation~\eqref{eq:infty_one};
this in turn translates to an upper bound on~$m$ as a function of~$N$, which is our goal.
Such uses of matrix inequalities go back to work of Ben-Aroya, Regev and de~Wolf~\cite{BenAroyaRdW:2008}, in turn inspired by work of Kerenidis and de Wolf~\cite{KerenidisW:2004} (see also~\cite{Briet:2016}).

The matrices we will construct are indexed by sets of a given size~$s$, where (with hindsight) we choose $s = \floor{N^{1-2/k}}$.
Recall that $k=2r+1$.
For $i\in L$, $j\in R$, define the matrix $M_{ij} \in \R^{\binom{G}{s} \times \binom{G}{s}}$ by
$$M_{ij}(S, T) = \sum_{x\in G} \one\big\{ |S\cap P_i(x)| = |S\cap P_j(x)| = r,\, S\triangle T = P_{ij}(x) \big\}$$
if $|P_{ij}(0)| = 4r$, and $M_{ij}(S, T) = 0$ if $|P_{ij}(0)| \neq 4r$;
note that, despite the asymmetry in their definition, these matrices are in fact symmetric.
We will next deduce from inequality~\eqref{eq:large_exp} a lower bound on the expectation~\eqref{eq:infty_one}.

For a vector~$Z\in \pmset{G}$, denote by~$Z^{\odot s}\in \pmset{\binom{G}{s}}$ the ``lifted'' vector given by
\beqn
Z^{\odot s}(S) = \prod_{y\in S} Z(y) \quad \text{for all } S\in \binom{G}{s}.
\eeqn
If $|P_{ij}(0)| = 4r$, then for all $Z\in \pmset{G}$ we have
\begin{align}
    \sum_{S, T \in \binom{G}{s}} M_{ij}(S, T) Z^{\odot s}(S) Z^{\odot s}(T)
    %\sum_{S, T \in \binom{G}{s}} M_{ij}(S, T) &Z^{\odot s}(S) Z^{\odot s}(T) \nonumber\\
    &= \sum_{S, T \in \binom{G}{s}} M_{ij}(S, T) \prod_{y\in S\triangle T} Z(y) \nonumber\\
    &= \sum_{x\in G} \sum_{S\in \binom{G}{s}} \one\big\{ |S\cap P_i(x)| = |S\cap P_j(x)| = r\big\} \prod_{y\in P_{ij}(x)} Z(y) \nonumber\\
    &= \binom{2r}{r}^2 \binom{N-4r}{s-2r} \sum_{x\in G} \prod_{y\in P_{ij}(x)} Z(y),\label{eq:MSTZbound}
\end{align}
since there are $\binom{2r}{r}^2 \binom{N-4r}{s-2r}$ ways of choosing a set $S\in \binom{G}{s}$ satisfying $|S\cap P_i(x)| = |S\cap P_j(x)| = r$
and, once such a set~$S$ is chosen, there is only one set $T\in \binom{G}{s}$ for which $S\triangle T = P_{ij}(x)$.
It follows that
\begin{align*}
    \Exp_{\substack{\sigma\in \pmset{L}\\ \tau\in\pmset{R}}} &\bigg\| \sum_{i\in L, j\in R} \sigma_i \tau_j M_{ij} \bigg\|_{\infty\to 1} \\
    &\geq \Exp_{\substack{\sigma\in \pmset{L}\\ \tau\in\pmset{R}}} \max_{Z\in \pmset{G}} \sum_{S, T \in \binom{G}{s}} \sum_{i\in L, j\in R} \sigma_i \tau_j M_{ij}(S, T) Z^{\odot s}(S) Z^{\odot s}(T) \\
    &= \Exp_{\substack{\sigma\in \pmset{L}\\ \tau\in\pmset{R}}} \max_{Z\in \pmset{G}} \binom{2r}{r}^2 \binom{N-4r}{s-2r} \sum_{\substack{i\in L, j\in R \\ |P_{ij}(0)| = 4r}} \sigma_i \tau_j \sum_{x\in G} \prod_{y\in P_{ij}(x)} Z(y);
\end{align*}
combining this with inequalities~\eqref{eq:large_exp} and~\eqref{eq:distinct}, we conclude the lower bound
\beq\label{eq:lower_bound}
    \Exp_{\substack{\sigma\in \pmset{L}\\ \tau\in\pmset{R}}} \bigg\| \sum_{i\in L, j\in R} \sigma_i \tau_j M_{ij} \bigg\|_{\infty\to 1} \gg_{k,\eps} \binom{N-4r}{s-2r} m^2 N.
\eeq

\subsection{Applying a Khintchine-type inequality}

Now we need to compute an upper bound for the expectation above.
The main idea here is to use the non-commutative version of Khintchine's inequality given in Theorem~\ref{thm:Khintchine}.
Intuitively, this inequality shows that the sum in the last expression incurs many cancellations due to the presence of the random signs~$\sigma_i$, and thus the expectation on the left-hand side of~\eqref{eq:lower_bound} is much smaller than one might expect.

To apply Theorem~\ref{thm:Khintchine}, it is better to collect the matrices~$M_{ij}$ into groups and use only one half of the random signs~$\sigma_i$ (another idea from~\cite{AlrabiahGKM:2022}).
For $i\in L$, $\tau \in \{-1, 1\}^R$, we define the matrix
\beqn
M_i^{\tau} = \sum_{j\in R} \tau_j M_{ij}.
\eeqn
We will then provide an upper bound for the expression
\beqn %\label{eq:infty_one}
\max_{\tau \in \pmset{R}} \Exp_{\sigma \in \pmset{L}} \bigg\| \sum_{i\in L} \sigma_i M_i^{\tau} \bigg\|_{\infty\to 1}
\eeqn
which is itself an upper bound for the expectation in~\eqref{eq:lower_bound}.

Towards this goal, we will prune the matrices~$M^\tau_i$ by removing all rows and columns whose $\ell_1$-weight significantly exceeds the average.
By symmetry and non-negativity of these matrices, the $\ell_1$-weight of a row or column indexed by a set~$S\in {G\choose s}$ is bounded by
\begin{align*}
 \sum_{T\in \binom{G}{s}} \bigg|\sum_{j\in R} \tau_j M_{ij}(S, T)\bigg|
 &\leq
 \sum_{T\in \binom{G}{s}} \sum_{j\in R} M_{ij}(S, T)\\&=
 \sum_{\substack{j\in R \\ |P_{ij}(0)| = 4r}} \sum_{x\in G} \one\big\{ |S\cap P_i(x)| = |S\cap P_j(x)| = r\big\}.
\end{align*}
To show that pruning makes little difference to the final bounds, we show that only a small proportion of the rows and columns have large $\ell_1$-weight.
To this end, let $U$ be a uniformly distributed $\binom{G}{s}$-valued random variable and, for each $i\in L$, define the random variable corresponding to the last expression above,
%Let~$X_i^S$ be the maximum $\ell_1$-weight of the line indexed by~$S$ in some matrix~$M_i^{\sigma_R}$, that is
\beqn
X_i :=  \sum_{\substack{j\in R \\ |P_{ij}(0)| = 4r}} \sum_{x\in G} \one\big\{ |U\cap P_i(x)| = |U\cap P_j(x)| = r\big\}.
\eeqn
%By nonnegativity of the matrices~$M_{ij}$, it follows that
%\beqn
%X_i^S = \sum_{T\in \binom{G}{s}} \sum_{j\in R} M_{ij}(S, T).
%\eeqn
The calculation done in~\eqref{eq:MSTZbound}, with $Z$ the all-ones vector, shows that
\beqn
    \Exp[X_i] = \frac{1}{\binom{N}{s}} \sum_{\substack{j\in R \\ |P_{ij}(0)| = 4r}} \binom{2r}{r}^2 \binom{N-4r}{s-2r} N
    \ll_k \frac{1}{\binom{N}{s}} \binom{N-4r}{s-2r} mN.
\eeqn
Since $s=\lfloor N^{1-2/k}\rfloor$, we have that $\binom{N-4r}{s-2r}/\binom{N}{s} \ll_k (s/N)^{2r} \asymp N^{-(2-2/k)}$ and thus
\beq
    \Exp[X_i] \ll_k  \frac{m}{N^{1 - 2/k}}. \label{eq:mu0bound}
\eeq
%Writing
%\beqn
%p = \frac{\binom{2r}{r}^2 \binom{N-4r}{s-2r}}{\binom{N}{s}},
%\eeqn
%we conclude that $\Exp_{S\in \binom{G}{s}} X_i^S \leq pmN$.
The following lemma gives an upper-tail estimate on~$X_i$, provided~$m$ is sufficiently large.

%Let us say that row/column~$S$ is \emph{heavy} in~$M_i^{\sigma_R}$ if
%\beqn
%\big\|M_i^{\sigma_R}(S, \cdot)\big\|_1 = \sum_{T\in \binom{G}{s}} \big|M_i^{\sigma_R}(S, T)\big| \geq (\log N)^{4r} pmN,
%\eeqn
%and that it is \emph{light} otherwise.
%Thus heavy rows/columns have $\ell_1$-weight which are higher than expected by a poly-logarithmic factor.
%The next lemma shows that very few rows are heavy:

\begin{lem}\label{lem:prune}
    Suppose that $m \geq N^{1 -2/k}$. Then, for every $i\in L$, we have that
    \beqn
        \Pr\Big[X_i \geq (\log N)^k \frac{m}{N^{1 - 2/k}}\Big] \leq \frac{1}{N^4}.
    \eeqn
    %there are at most $\binom{N}{s}/N^3$ sets $S\in \binom{G}{s}$ such that $X_i^S \geq (\log N)^{4r}pmN$.
    %For every~$i\in L$, $\sigma_R\in \pmset{R}$, the number of heavy rows in~$M_i^{\sigma_R}$ is at most~$\binom{N}{s}/N^3$.
\end{lem}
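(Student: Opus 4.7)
My plan is to bound $X_i$ pointwise by a monotone polynomial in the Bernoulli coordinates $(\one_U(y))_{y\in G}$ and apply the polynomial concentration inequality of Corollary~\ref{cor:kimvu}. For any $P\subseteq G$, we have
\[
\one\{|U\cap P|= r\}\leq \one\{|U\cap P|\geq r\}\leq \sum_{A\in\binom{P}{r}}\prod_{y\in A}\one_U(y),
\]
since the right-hand side counts the $r$-subsets of $U\cap P$ and is at least one as soon as $|U\cap P|\geq r$. Applying this to both $P=P_i(x)$ and $P=P_j(x)$ yields $X_i\leq Y_i(\one_U)$, where
\[
Y_i(z) = \sum_{\substack{j\in R\\ |P_{ij}(0)|=4r}}\sum_{x\in G}\sum_{\substack{A\in \binom{P_i(x)}{r}\\ B\in \binom{P_j(x)}{r}}}\prod_{y\in A\cup B} z_y.
\]
This polynomial fits the form~\eqref{eq:fkimvu}, and all of its hyperedges have size exactly $2r=k-1$ because $|P_{ij}(0)|=4r$ forces $P_i(x)\cap P_j(x)=\emptyset$ for every~$x$, making $A\cup B$ a disjoint union.

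Next, I would apply Corollary~\ref{cor:kimvu} with parameters $n=N$, $t=s$ and $p=2s/N$, which satisfy $t=pN/2$ and $p>16/N$ for $N$ large. A direct computation, reproducing the calculation leading to~\eqref{eq:mu0bound}, gives
\[
\mu_0 = \Exp_{X\sim\Bern(p)^G} Y_i(X) \leq \binom{2r}{r}^2 mN\, p^{2r}\ll_k \frac{m}{N^{1-2/k}}.
\]
The heart of the proof is then to show that $\mu = \max_{q\geq 0}\mu_q \ll_k (\log N)\,\mu_0$; combined with Corollary~\ref{cor:kimvu}, this yields a tail bound at threshold $O_k\big((\log N)^{2r-1/2}\mu\big)\leq (\log N)^k\, m/N^{1-2/k}$ for $N$ sufficiently large, with failure probability at most~$1/N^4$.

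To bound $\mu_q$ for $q\geq 1$, I would fix $A^*\subseteq G$ with $|A^*|=q$ and estimate the codegree
\[
\deg(A^*) = \big|\{(j,x,A,B): A^*\subseteq A\cup B,\ A\in\binom{P_i(x)}{r},\ B\in\binom{P_j(x)}{r},\ |P_{ij}(0)|=4r\}\big|,
\]
so that $\mu_q = p^{2r-q}\max_{A^*}\deg(A^*)$. Since $P_i(x)\cap P_j(x)=\emptyset$ whenever $|P_{ij}(0)|=4r$, the set $A^*$ partitions uniquely as $A^*_i\uplus A^*_j$ with $A^*_i\subseteq P_i(x)$ and $A^*_j\subseteq P_j(x)$. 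If $A^*_i\neq\emptyset$, fixing any $y\in A^*_i$ forces $x=y-a\,d_i$ for some $a\in[2r]$, giving at most $2r$ choices of~$x$; symmetrically when $A^*_j\neq\emptyset$. If both parts are nonempty, the identity $b\,d_j = y'-x$ for some $y'\in A^*_j$ and $b\in[2r]$, combined with the multiplicity bound~\eqref{eq:multiplicity}, restricts $j$ to $O(\log N)$ values. A short case analysis then yields $\deg(A^*)\ll_k m+\log N$ when $q\leq r$ (where the degenerate case $A^*\subseteq P_i(x)$ allows $j$ to range freely over~$R$) and $\deg(A^*)\ll_k \log N$ when $q>r$; together with $m\geq N^{1-2/k}$ and $p=2s/N$, one verifies $\mu_q\ll_k (\log N)\,\mu_0$ in both ranges.

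The main obstacle is the codegree estimate above: without the multiplicity bound~\eqref{eq:multiplicity} satisfied by the ``good'' difference set~$D$, generic codegrees could already reach $\Theta(m)$ at $|A^*|=2$, inflating $\mu$ by a polynomial factor in $N$ and defeating the desired bound.
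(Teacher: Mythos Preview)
Your proposal is correct and follows essentially the same route as the paper: both express (or bound) $X_i$ via the polynomial attached to the $(k-1)$-uniform hypergraph with edges $A\cup B$ for $A\in\binom{P_i(x)}{r}$, $B\in\binom{P_j(x)}{r}$, bound the codegrees $\mu_q$ by a case split at $q=r$ using the multiplicity condition~\eqref{eq:multiplicity} in the large-$q$ range, and finish via Corollary~\ref{cor:kimvu}. Your treatment is in fact slightly more careful than the paper's, correctly writing $X_i\leq Y_i(\one_U)$ rather than equality (the paper's identity overlooks that $|U\cap P_i(x)|$ may exceed~$r$) and choosing $p=2s/N$ so that the hypothesis $t\leq pn/2$ of Corollary~\ref{cor:kimvu} is genuinely satisfied.
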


\begin{proof}
% Since $\|M_i^{\sigma_R}(S, \cdot)\|_1 \leq X_i^S$ for all $\sigma_R\in \pmset{R}$, it suffices to show that
% \beqn
% \Pr_{S\in \binom{G}{s}} \big(X_i^S \geq (\log N)^{4r} pmN\big) \leq \frac{1}{N^3} \quad \text{for all~$i\in L$.}
% \eeqn
% For this, we use Corollary~\ref{cor:kimvu}.
% For $i\in L$, $S\in \binom{G}{s}$ we have
% \beqn
% X_i^S = \sum_{\substack{j\in R \\ |P_{ij}(0)| = 4r}} \sum_{x\in G} \one\big\{ |S\cap P_i(x)| = |S\cap P_j(x)| = r\big\}.
% \eeqn
Fix an $i\in L$.
Consider the hypergraph~$H_i$ on vertex set~$G$ and with edge set
\beqn
E(H_i) = \biguplus_{\substack{j\in R \\ |P_{ij}(0)| = 4r}} \biguplus_{x\in G} \binom{P_i(x)}{r} \times \binom{P_j(x)}{r},
\eeqn
and let~$f:\R^G\to \R$ be the polynomial associated with~$H_i$ as in~\eqref{eq:fkimvu},
\beqn
f(t) = \sum_{e\in E(H_i)}\prod_{v\in e} t_v.
\eeqn
% and let $Y_i: \R^G \to \R$ be the polynomial
% $$Y_i(t) = \sum_{A\in \binom{[2r]}{r}} \sum_{B\in \binom{[2r]}{r}} \sum_{\substack{j\in R \\ |P_{ij}(0)| = 4r}} \sum_{x\in G} \prod_{a\in A} t_{x+ad_i} \prod_{b\in B} t_{x+bd_j}.$$
% This polynomial has~$H_i$ as its underlying hypergraph.
Note that $X_i = f(1_U)$, where~$U$ is uniformly distributed over $\binom{G}{s}$ and~$1_U \in \R^G$ denotes its (random) indicator vector.

For each $0\leq \ell \leq 2r$, we wish to bound the quantity
\beqn
\mu_\ell := \max_{A\in \binom{G}{\ell}} \Exp_{t\sim \Bern(s/N)^G} f_A(t).
\eeqn
(Recall the notation introduced in Section~\ref{sec:prelim}.)
By~\eqref{eq:mu0bound}, we have that $\mu_0 \ll_k mN^{-(1 - 2/k)}$.
For a set $A\in {G\choose \ell}$, define its degree in~$H_i$ by
\beqn
\deg(A) = |\{e\in E(H_i) \st e\supseteq A\}|,
\eeqn
where we count multiplicities of repeated edges.
Note that for any $B\subseteq A$, we have that $\deg(A) \leq \deg(B)$.
Then,
\beqn
\mu_\ell = \max_{A\in \binom{G}{\ell}} \Big(\frac{s}{N}\Big)^{2r-\ell}\deg(A).
\eeqn
For any $v\in G$, we have that $\deg(v) \ll_k m$, since~$v$ is contained in~$O_k(1)$ arithmetic progressions of length~$k$ with a fixed common difference.
It follows that for $\ell \in [r]$, we have that
\beqn
\mu_\ell \:\leq\: 
\Big(\frac{s}{N}\Big)^{2r-\ell} \max_{v\in G}\deg(v)
\:\ll_k\: 
m N^{-2r/(2r+1)} 
\:=\: \frac{m}{N^{1 - 1/k}}.
\eeqn

Let $A\subseteq G$ be a set of size $\ell\in \{r+1,\dots,2r\}$ and 
\beqn
e\in \binom{P_i(x)}{r} \times \binom{P_j(x)}{r}
\eeqn
 be an edge of~$E(H_i)$ that contains~$A$.
 By the Pigeonhole principle,~$A$ contains an element~$a\in P_i(x)$ and an element~$b\in P_j(x)$.
 Knowing~$a$ limits~$x$ to a set of size at most~$2r$.
 Moreover, it follows from~\eqref{eq:multiplicity} that for each~$x$, there are at most~$O_k(\log N)$ possible values of~$j\in R$ such that $b\in P_j(x)$.
 Therefore,
 \beqn
\mu_\ell \ll_k \Big(\frac{s}{N}\Big)^{2r-\ell} \log N \leq \log N.
 \eeqn

 Using our assumption on~$m$, it follows that for each $\ell\in \{0,\dots, 2r\}$, we have that $\mu_\ell \ll_k mN^{-(1 - 2/k)}\log N$.
 The result now follows directly from Corollary~\ref{cor:kimvu}.
 \end{proof}

Lemma~\ref{lem:prune} shows that for each matrix~$M_i^\tau$, at most an $N^{-4}$ fraction of all rows and columns have $\ell_1$-weight exceeding $(\log N)^k mN^{-(1 - 2/k)}$.
Now define~$\widetilde{M}_i^{\tau}$ as the `pruned' matrix obtained from~$M_i^{\tau}$ by zeroing out all such heavy rows and columns.
% more explicitly, it is the matrix in~$\R^{\binom{G}{s}\times \binom{G}{s}}$ given by
% \beqn
% \widetilde{M}_i^{\tau}(S, T) =
% \begin{cases}
%     0 &\text{if row $S$ or column $T$ are heavy,} \\
%     M_i^{\tau}(S, T) &\text{if row $S$ and column $T$ are light.}
% \end{cases}
% \eeqn
Note that~$\widetilde{M}_i^{\tau}$ is symmetric, and so
\beqn
\|\widetilde{M}_i^{\tau}\|_2 \leq \|\widetilde{M}_i^{\tau}\|_{1\to 1}
= \max_{S\in \binom{G}{s}} \|\widetilde{M}_i^{\tau}(S, \cdot)\|_1
\leq (\log N)^k \frac{m}{N^{1 - 2/k}};
\eeqn
this bound on the operator norm is what makes the pruned matrices more convenient for us to work with.

We first show that replacing the original matrices by their pruned versions has negligible effect on our bounds.
Indeed, from the definition of~$X_i$ we see that its maximum value is bounded by~$mN$, and so
\begin{align*}
    \big\| M_i^{\tau} - \widetilde{M}_i^{\tau} \big\|_{\infty\to 1}
    &\leq \sum_{S\in \binom{G}{s}} \big\| M_i^{\tau}(S, \cdot) - \widetilde{M}_i^{\tau}(S, \cdot) \big\|_{1} \\
    &\leq 2 \binom{N}{s} \cdot \Exp \big[X_i \,\one\big\{X_i \geq (\log N)^k m N^{-(1-2/k)} \big\}\big] \\
    &\leq 2\binom{N}{s} \cdot mN \Pr \big[X_i \geq (\log N)^k m N^{-(1-2/k)} \big].
\end{align*}
(The multiplication by 2 in the second inequality happens because we must take into account both heavy rows and heavy columns.)
By Lemma~\ref{lem:prune} we conclude that
\beq \label{eq:prune_bound}
\big\| M_i^{\tau} - \widetilde{M}_i^{\tau} \big\|_{\infty\to 1} \leq \frac{2m}{N^3} \binom{N}{s} \quad \text{for all $i\in L$, $\tau\in \bset{R}$.}
\eeq

Next we apply the concentration inequality from Theorem~\ref{thm:Khintchine} to the pruned matrices~$\widetilde{M}_i^{\tau}$;
we obtain
\begin{align*}
    \Exp_{\sigma \in \pmset{L}} \bigg\| \sum_{i\in L} \sigma_i \widetilde{M}_i^{\tau} \bigg\|_{\infty\to 1}
    &\leq \binom{N}{s} \Exp_{\sigma \in \pmset{L}} \bigg\| \sum_{i\in L} \sigma_i \widetilde{M}_i^{\tau} \bigg\|_2 \\
    &\leq 10 \binom{N}{s} \sqrt{\log \binom{N}{s}} \bigg(\sum_{i\in L} \|\widetilde{M}_i^{\tau}\|_2^2 \bigg)^{1/2} \\
    &\leq 10 \binom{N}{s} \sqrt{\log \binom{N}{s}} \bigg(\sum_{i\in L} \|\widetilde{M}_i^{\tau}\|_{1\to 1}^2 \bigg)^{1/2}\\
    &\leq 10 \binom{N}{s} \sqrt{s\log N} \cdot m^{1/2} (\log N)^k \frac{m}{N^{1 - 2/k}}.
\end{align*}
By the triangle inequality and our previous bounds, we conclude that
\begin{align*}
    \Exp_{\sigma \in \pmset{L}} \bigg\| \sum_{i\in L} \sigma_i M_i^{\tau} \bigg\|_{\infty\to 1}
    &\leq \Exp_{\sigma \in \pmset{L}} \bigg\| \sum_{i\in L} \sigma_i \widetilde{M}_i^{\tau} \bigg\|_{\infty\to 1} + \sum_{i\in L} \big\| M_i^{\tau} - \widetilde{M}_i^{\tau} \big\|_{\infty\to 1} \\
    &\leq 10 \binom{N}{s} \sqrt{s\log N} \cdot m^{1/2} (\log N)^k \frac{m}{N^{1 - 2/k}} + \frac{2m^2}{N^3} \binom{N}{s}.
\end{align*}

\subsection{Finishing the proof}

We are now essentially done, and it only remains to combine the upper and lower bounds obtained.
Indeed, combining the last inequality with equation~\eqref{eq:lower_bound} gives
\beqn
\binom{N-4r}{s-2r} m^2 N \ll_{k,\eps} \binom{N}{s} \sqrt{ms\log N} (\log N)^k \frac{m}{N^{1- 2/k}}.
\eeqn
Rearranging and using that $\binom{N}{s}/\binom{N-4r}{s-2r} \ll_k (N/s)^{2r} = N^{2 - 2/k}$, we conclude that
\beqn
m \ll_{k,\eps} s (\log N)^{2k+1} = N^{1-2/k} (\log N)^{2k+1}.
\eeqn
As we started with the assumption~\eqref{eq:Sz_failure}, this shows that
$m_{k-1,\eps}^*(G) \ll_{k,\eps} N^{1-2/k} (\log N)^{2k+1}$
as wished.

\section{Discussion}

Our bounds on~$m_t^*(N)$ are far from the conjectured~$\Theta_t(\log N)$, and we do not believe that they are best possible.
We quickly mention a few avenues that could be explored to obtain better bounds, focusing on the case $m_2^*(N)$ concerning 3-APs for clarity:

\begin{itemize}
    \item A possible source of inefficiency in our arguments is that, after the symmetrization step (Lemma~\ref{lem:symmetrization}), the fact that $D\in G^m$ is a random sequence is not used in any important way.\footnote{It is used in a weak way to obtain the technical conditions~\eqref{eq:distinct} and~\eqref{eq:multiplicity}, but those are mostly technicalities inessential to the main argument.}
    An improvement on our bound for~$m_2^*(N)$ might follow from a possible discrepancy between the worst-case~$D$ considered in the present proof and the average-case setting appearing in the problem.
    
    \item Another possibility is to use a multilinear version of the non-commutative Khintchine inequality to directly bound the final expression in Lemma~\ref{lem:symmetrization} for a fixed sequence~$D$.
    Endow the space of trilinear forms (or tensors) $\R^N\times\R^N\times\R^N\to \R$ with the norm
    \begin{equation*}
        \|T\| = \sup\Big\{|T(x,y,z)|\mid \|x\|_{3},\|y\|_3,\|z\|_3\leq 1\Big\}.
    \end{equation*}
    For trilinear forms $T_1,\dots,T_m$, a bound of the form
    \begin{equation*}
        \Exp_{\sigma\in \pmset{m}}\Big\|\sum_{i=1}^m\sigma_iT_i\Big\| \leq C(N)\Big(\sum_{i=1}^m\|T_i\|^2\Big)^{\frac{1}{2}}
    \end{equation*}
    would imply that $m_2^*(N) \ll C(N)^2$.
    
    The techniques used in the present paper establish the best bounds currently known for \emph{permutation tensors} of the form
    \begin{equation*}
        T(x,y,z) = \sum_{i=1}^Nx_iy_{\pi_1(i)}z_{\pi_2(i)}, 
    \end{equation*}
    where~$\pi_1,\pi_2\in S_N$ are permutations;
    this case is sufficient to deal with the forms~$\Lambda_D$ appearing in our proofs.
    We believe that the bound obtained this way for permutation tensors is not best possible, and sharper bounds for this problem would lead to improvements for~$m_2^*(N)$.
    Note, however, that this avenue by itself does not suffice to obtain a result of the form $m_2^*(N) \leq \poly\log(N)$:
    there is a sequence of permutation tensors (originating from LDC constructions) which imply that $C(N) \geq (\log N)^{\omega(1)}$ is necessary~\cite{BNR:2012, Briet:2016}.

    \item The main technical tool used in the present paper is the non-commutative Khintchine inequality (Theorem~\ref{thm:Khintchine}).
    This inequality is sharp in general, but can be improved when the collection of matrices considered is highly non-commutative;
    see~\cite[Chapter~7]{tropp2015introduction} for a discussion on this point.
    Our matrices are quite close to being commutative, however, and so a possible route for improvement could be to find truly non-commutative matrix embeddings of the multilinear forms $\sum_{x\in G} \prod_{y\in P_{ij}(x)} Z(y)$ appearing in equation~\eqref{eq:large_exp}.
\end{itemize}

\printbibliography

@preamble{{\providecommand{\noopsort}[1]{}}}

@article{tropp2015introduction,
    title={An Introduction to Matrix Concentration Inequalities},
    author={Tropp, Joel A.},
    journal={Foundations and Trends® in Machine Learning},
    volume={8},
    number={1-2},
    pages={1--230},
    year={2015},
    publisher={Now Publishers Inc.},
    doi={10.1561/2200000048},
    url={http://dx.doi.org/10.1561/2200000048}}

@article{BNR:2012,
    AUTHOR = {Bri\"et, Jop and Naor, Assaf and Regev, Oded},
    TITLE = {Locally decodable codes and the failure of cotype for projective tensor products},
    JOURNAL = {Electron. Res. Announc. Math. Sci.},
    FJOURNAL = {Electronic Research Announcements in Mathematical Sciences},
    VOLUME = {19},
    YEAR = {2012},
    PAGES = {120--130},
    ISSN = {1935-9179},
    MRCLASS = {46B07 (46B28)},
    MRNUMBER = {2999057},
    MRREVIEWER = {Daniel\ M.\ Pellegrino},
    DOI = {10.3934/era.2012.19.120},
    URL = {https://doi.org/10.3934/era.2012.19.120}}

@inproceedings{BrietCastroSilva:2023,
    author = {Bri\"{e}t, Jop and Castro-Silva, Davi},
    title = {Raising the roof on the threshold for Szemerédi‘s theorem with random differences},
    year = {2023},
    publisher = {Masaryk University, Brno},
    doi = {10.5817/CZ.MUNI.EUROCOMB23-032},
    series = {EUROCOMB 2023},
    pages = {231-237},
    booktitle = {Proceedings of the 12th European Conference on
Combinatorics, Graph Theory and Applications},
    ISBN = {978-80-280-0344-9},
    EDITOR = {Kr\'{a}l' , Daniel and Ne\v{s}et\v{r}il, Jaroslav}}

@inproceedings{AlrabiahGKM:2022,
    author = {Alrabiah, Omar and Guruswami, Venkatesan and Kothari, Pravesh K. and Manohar, Peter},
    title = {A Near-Cubic Lower Bound for 3-Query Locally Decodable Codes from Semirandom CSP Refutation},
    year = {2023},
    isbn = {9781450399135},
    publisher = {Association for Computing Machinery},
    address = {New York, NY, USA},
    doi = {10.1145/3564246.3585143},
    pages = {1438–1448},
    numpages = {11},
    keywords = {Locally decodable codes, CSP refutation},
    location = {Orlando, FL, USA},
    series = {STOC 2023}}

@INPROCEEDINGS{BenAroyaRdW:2008,
    author = {Ben-Aroya, Avraham and Regev, Oded and \noopsort{Wolf}{de Wolf}, Ronald},
    booktitle = {2008 IEEE 49th Annual IEEE Symposium on Foundations of Computer Science (FOCS)},
    title = {A Hypercontractive Inequality for Matrix-Valued Functions with Applications to Quantum Computing and LDCs},
    year = {2008},
    volume = {},
    issn = {0272-5428},
    pages = {477-486},
    doi = {10.1109/FOCS.2008.45},
    publisher = {IEEE Computer Society}}

@article{BergelsonL:1996,
    AUTHOR = {Bergelson, V. and Leibman, A.},
    TITLE = {Polynomial extensions of van der {W}aerden's and {S}zemer\'{e}di's theorems},
    JOURNAL = {J. Amer. Math. Soc.},
    FJOURNAL = {Journal of the American Mathematical Society},
    VOLUME = {9},
    YEAR = {1996},
    NUMBER = {3},
    PAGES = {725--753},
    ISSN = {0894-0347},
    MRCLASS = {11B25 (05D10 28D05 54H20)},
    MRNUMBER = {1325795},
    MRREVIEWER = {Pierre Michel},
    DOI = {10.1090/S0894-0347-96-00194-4}}

@article{BollobasT:1987,
    AUTHOR = {Bollob\'{a}s, B. and Thomason, A.},
    TITLE = {Threshold functions},
    JOURNAL = {Combinatorica},
    FJOURNAL = {Combinatorica. An International Journal of the J\'{a}nos Bolyai Mathematical Society},
    VOLUME = {7},
    YEAR = {1987},
    NUMBER = {1},
    PAGES = {35--38},
    ISSN = {0209-9683},
    MRCLASS = {05C80 (04A20 05A05)},
    MRNUMBER = {905149},
    MRREVIEWER = {Zbigniew Palka},
    DOI = {10.1007/BF02579198}}

@article{Bourgain:1987,
    AUTHOR = {Bourgain, J.},
    TITLE = {Ruzsa's problem on sets of recurrence},
    JOURNAL = {Israel J. Math.},
    FJOURNAL = {Israel Journal of Mathematics},
    VOLUME = {59},
    YEAR = {1987},
    NUMBER = {2},
    PAGES = {150--166},
    ISSN = {0021-2172},
    MRCLASS = {11B75 (11B83 11K31)},
    MRNUMBER = {920079},
    MRREVIEWER = {Zun Shan},
    DOI = {10.1007/BF02787258}}

@misc{Briet:2016,
      title={On Embeddings of $\ell_1^k$ from Locally Decodable Codes}, 
      author={Jop Briët},
      year={2016},
      eprint={1611.06385},
      archivePrefix={arXiv},
      primaryClass={cs.CC},
      doi = {doi.org/10.48550/arXiv.1611.06385}}

@article{BrietDG:2019,
    AUTHOR = {Bri\"{e}t, Jop and Dvir, Zeev and Gopi, Sivakanth},
    TITLE = {Outlaw distributions and locally decodable codes},
    JOURNAL = {Theory Comput.},
    FJOURNAL = {Theory of Computing. An Open Access Journal},
    VOLUME = {15},
    YEAR = {2019},
    PAGES = {Paper No. 12, 24},
    MRCLASS = {68Q87 (68R05 68R10 94B05)},
    MRNUMBER = {4028880},
    DOI = {10.4086/toc.2019.v015a012}}

@article{BrietG:2020,
    AUTHOR = {Bri\"{e}t, Jop and Gopi, Sivakanth},
    TITLE = {Gaussian width bounds with applications to arithmetic progressions in random settings},
    JOURNAL = {Int. Math. Res. Not. IMRN},
    FJOURNAL = {International Mathematics Research Notices. IMRN},
    YEAR = {2020},
    NUMBER = {22},
    PAGES = {8673--8696},
    ISSN = {1073-7928},
    MRCLASS = {11B25 (60F10)},
    MRNUMBER = {4216700},
    MRREVIEWER = {Ben Joseph Green},
    DOI = {10.1093/imrn/rny238}}

@misc{Christ:2011,
    doi = {10.48550/ARXIV.1108.5655},
    author = {Christ, Michael},
    keywords = {Classical Analysis and ODEs (math.CA), FOS: Mathematics, FOS: Mathematics, 42B20},
    title = {On random multilinear operator inequalities},
    publisher = {arXiv},
    year = {2011}}

@article{FrantzikinakisLW:2012,
    AUTHOR = {Frantzikinakis, Nikos and Lesigne, Emmanuel and Wierdl, M\'{a}t\'{e}},
    TITLE = {Random sequences and pointwise convergence of multiple ergodic averages},
    JOURNAL = {Indiana Univ. Math. J.},
    FJOURNAL = {Indiana University Mathematics Journal},
    VOLUME = {61},
    YEAR = {2012},
    NUMBER = {2},
    PAGES = {585--617},
    ISSN = {0022-2518},
    MRCLASS = {37A30 (05D10 28D05)},
    MRNUMBER = {3043589},
    MRREVIEWER = {Lasha Ephremidze},
    DOI = {10.1512/iumj.2012.61.4571}}

@article{FrantzikinakisLW:2016,
    AUTHOR = {Frantzikinakis, Nikos and Lesigne, Emmanuel and Wierdl, M\'{a}t\'{e}},
    TITLE = {Random differences in {S}zemer\'{e}di's theorem and related results},
    JOURNAL = {J. Anal. Math.},
    FJOURNAL = {Journal d'Analyse Math\'{e}matique},
    VOLUME = {130},
    YEAR = {2016},
    PAGES = {91--133},
    ISSN = {0021-7670},
    MRCLASS = {37A45 (05D40 11B30 60F15)},
    MRNUMBER = {3574649},
    MRREVIEWER = {Radhakrishnan Nair},
    DOI = {10.1007/s11854-016-0030-z}}

@article{Furstenberg:1977,
    AUTHOR = {Furstenberg, Harry},
    TITLE = {Ergodic behavior of diagonal measures and a theorem of {S}zemer\'{e}di on arithmetic progressions},
    JOURNAL = {J. Analyse Math.},
    FJOURNAL = {Journal d'Analyse Math\'{e}matique},
    VOLUME = {31},
    YEAR = {1977},
    PAGES = {204--256},
    ISSN = {0021-7670},
    MRCLASS = {10L10 (10K10 28A65)},
    MRNUMBER = {498471},
    MRREVIEWER = {Fran\c{c}ois Aribaud},
    DOI = {10.1007/BF02813304}}

@article{Gowers:2007,
    title={Hypergraph regularity and the multidimensional Szemer{\'e}di theorem},
    author={Gowers, W Timothy},
    journal={Annals of Mathematics},
    pages={897--946},
    volume = {166},
    year={2007},
    publisher={JSTOR},
    doi = {10.4007/annals.2007.166.897 }}

@article{KerenidisW:2004,
    AUTHOR = {Kerenidis, Iordanis and de Wolf, Ronald},
    TITLE = {Exponential lower bound for 2-query locally decodable codes via a quantum argument},
    JOURNAL = {J. Comput. System Sci.},
    FJOURNAL = {Journal of Computer and System Sciences},
    VOLUME = {69},
    YEAR = {2004},
    NUMBER = {3},
    PAGES = {395--420},
    ISSN = {0022-0000},
    MRCLASS = {68P30 (68Q05 81P68 94B65)},
    MRNUMBER = {2087942},
    DOI = {10.1016/j.jcss.2004.04.007},
    note = {Preliminary version in STOC'03}}

@article{KimV:2000,
    AUTHOR = {Kim, Jeong Han and Vu, Van H.},
    TITLE = {Concentration of multivariate polynomials and its applications},
    JOURNAL = {Combinatorica},
    FJOURNAL = {Combinatorica. An International Journal on Combinatorics and the Theory of Computing},
    VOLUME = {20},
    YEAR = {2000},
    NUMBER = {3},
    PAGES = {417--434},
    ISSN = {0209-9683},
    MRCLASS = {05C80 (60C05)},
    MRNUMBER = {1774845},
    MRREVIEWER = {Tomasz J. \L uczak},
    DOI = {10.1007/s004930070014}}

@article {NagleRS:2006,
    AUTHOR = {Nagle, Brendan and R\"{o}dl, Vojt\v{e}ch and Schacht, Mathias},
     TITLE = {The counting lemma for regular {$k$}-uniform hypergraphs},
   JOURNAL = {Random Structures Algorithms},
  FJOURNAL = {Random Structures \& Algorithms},
    VOLUME = {28},
      YEAR = {2006},
    NUMBER = {2},
     PAGES = {113--179},
      ISSN = {1042-9832},
   MRCLASS = {05C35 (05C65)},
  MRNUMBER = {2198495},
MRREVIEWER = {Jozef Skokan},
       DOI = {10.1002/rsa.20117},
       URL = {https://doi.org/10.1002/rsa.20117},
}

@article {RodlS:2004,
    AUTHOR = {R\"{o}dl, Vojt\v{e}ch and Skokan, Jozef},
     TITLE = {Regularity lemma for {$k$}-uniform hypergraphs},
   JOURNAL = {Random Structures Algorithms},
  FJOURNAL = {Random Structures \& Algorithms},
    VOLUME = {25},
      YEAR = {2004},
    NUMBER = {1},
     PAGES = {1--42},
      ISSN = {1042-9832},
   MRCLASS = {05D05 (05C65 05C75 60C05)},
  MRNUMBER = {2069663},
MRREVIEWER = {W. G. Brown},
       DOI = {10.1002/rsa.20017},
       URL = {https://doi.org/10.1002/rsa.20017},
}

@article{Szemeredi:1975,
    AUTHOR = {Szemer\'{e}di, E.},
    TITLE = {On sets of integers containing no {$k$} elements in arithmetic progression},
    JOURNAL = {Acta Arith.},
    FJOURNAL = {Polska Akademia Nauk. Instytut Matematyczny. Acta Arithmetica},
    VOLUME = {27},
    YEAR = {1975},
    PAGES = {199--245},
    ISSN = {0065-1036},
    MRCLASS = {10L10},
    MRNUMBER = {369312},
    MRREVIEWER = {S. L. G. Choi},
    DOI = {10.4064/aa-27-1-199-245}}

@article {TJ:1974,
    AUTHOR = {Tomczak-Jaegermann, Nicole},
     TITLE = {The moduli of smoothness and convexity and the {R}ademacher
              averages of trace classes {$S_{p}(1\leq p<\infty )$}},
   JOURNAL = {Studia Math.},
  FJOURNAL = {Polska Akademia Nauk. Instytut Matematyczny. Studia
              Mathematica},
    VOLUME = {50},
      YEAR = {1974},
     PAGES = {163--182},
      ISSN = {0039-3223},
   MRCLASS = {47B10 (46E30)},
  MRNUMBER = {355667},
MRREVIEWER = {E. Dubinsky},
}

@article{WooleyZ:2012,
    AUTHOR = {Wooley, Trevor D. and Ziegler, Tamar D.},
    TITLE = {Multiple recurrence and convergence along the primes},
    JOURNAL = {Amer. J. Math.},
    FJOURNAL = {American Journal of Mathematics},
    VOLUME = {134},
    YEAR = {2012},
    NUMBER = {6},
    PAGES = {1705--1732},
    ISSN = {0002-9327},
    MRCLASS = {11B30 (11B05)},
    MRNUMBER = {2999293},
    MRREVIEWER = {Martin Klazar},
    DOI = {10.1353/ajm.2012.0048}}

\end{document}